\newenvironment{proof}{{\bf Proof:  }}{\hfill\rule{2mm}{2mm}}
\newcommand{\junk}[1]{}
\newtheorem{theorem}{Theorem}
\newtheorem{lemma}[theorem]{Lemma}
\newtheorem{conjecture}{Conjecture}
\newtheorem{corollary}[theorem]{Corollary}
\newtheorem{definition}{Definition}
\newtheorem{remark}{Remark}
\newtheorem{question}{Question}
\newcommand{\Aut}{\ensuremath{\textnormal{Aut}}}
\newcommand{\spec}{\ensuremath{\text{spec}}}
\newcommand{\Th}{\ensuremath{\text{Th}}}
\DeclareMathOperator{\Tr}{Tr}
\title{Applications of the Harary-Sachs Theorem for Hypergraphs}
\author{Gregory J. Clark and Joshua N. Cooper\\
\small Sa\"id Business School\\[-0.8ex]
\small University of Oxford\\
\small \texttt{gregory.clark@sbs.ox.ac.uk  }\\
\small Department of Mathematics\\[-0.8ex]
\small University of South Carolina\\
\small \texttt{cooper@math.sc.edu}\\
}
\begin{document}
\maketitle
\begin{abstract}
The Harary-Sachs theorem for $k$-uniform hypergraphs equates the codegree-$d$ coefficient of the adjacency characteristic polynomial of a uniform hypergraph with a weighted sum of subgraph counts over certain multi-hypergraphs with $d$ edges. We begin by showing that the classical Harary-Sachs theorem for graphs is indeed a special case of this general theorem.  To this end we apply the generalized Harary-Sachs theorem to the leading coefficients of the characteristic polynomial of various hypergraphs. In particular, we provide explicit and asymptotic formulas for the contribution of the $k$-uniform simplex to the codegree-$d$ coefficient.  Moreover, we provide an explicit formula for the leading terms of the characteristic polynomial of a 3-uniform hypergraph and further show how this can be used to determine the complete spectrum of a hypergraph.    We conclude with a conjecture concerning the multiplicity of the zero-eigenvalue of a hypergraph.
\end{abstract}
\section{Introduction}
A foundational result in spectral graph theory of Harary \cite{Har} (and more explicitly, Sachs \cite{Sac}) showed how the coefficients of a graph's adjacency characteristic polynomial can be expressed as a weighted sum of the counts of certain subgraphs.   Famously, the Harary-Sachs theorem yields the leading coefficients of $\phi(G) = \sum_{i=0}^nc_i\lambda^{n-i}$: $c_0 = 1$, $c_1 = 0$, $c_2 = |E|$, and $c_3 = -2(\# \text{ of triangles in $G$})$.  These subgraphs are referred to as \emph{elementary subgraphs} of $G$.  An elementary subgraph of a graph is a simple subgraph whose components are edges or cycles (for further details, see \cite{Big}, Chapter 7).  In keeping with their notation, let $\Lambda_d$ be the set of simple graphs with $d$ vertices whose components are edges or cycles.
\begin{theorem}
\label{T:Harary}
(Harary-Sachs Theorem \cite{Har},\cite{Sac}) Let $G$ be a labeled simple graph on $n$ vertices.  Further let $c_d$ denote the codegree-$d$ coefficient of $\lambda^{n-d}$ in the characteristic polynomial of $G$, then 
\[
c_{d} = \sum_{H \in \Lambda_d} (-1)^{c(H)}2^{z(H)} [\# H \subseteq G]
\]
where $c(H)$ is the number of components of $H$, $z(H)$ is the number of components which are cycles, and $[\# H \subseteq G]$ denotes the number of (labeled) subgraphs of $G$ which are isomorphic to $H$.
\end{theorem}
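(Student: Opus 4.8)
The plan is to expand the characteristic polynomial $\phi(G,\lambda)=\det(\lambda I - A)$, where $A$ is the (symmetric, zero-one, zero-diagonal) adjacency matrix of $G$, by the Leibniz formula
\[
\det(\lambda I - A)=\sum_{\sigma\in S_n}\operatorname{sgn}(\sigma)\prod_{i=1}^{n}(\lambda I - A)_{i,\sigma(i)} .
\]
Since $(\lambda I - A)_{i,\sigma(i)}$ equals $\lambda$ when $\sigma(i)=i$ (here we use that $G$ is loopless, so $A_{ii}=0$, and in particular no lower powers of $\lambda$ appear) and equals $-A_{i,\sigma(i)}$ otherwise, the term indexed by a permutation $\sigma$ with fixed-point set $F$ is exactly $\lambda^{|F|}\prod_{i\notin F}(-A_{i,\sigma(i)})$. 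Hence the coefficient $c_d$ of $\lambda^{n-d}$ collects precisely those $\sigma$ with exactly $d$ non-fixed points; writing $S$ for that set and $\tau=\sigma|_S$ --- a fixed-point-free permutation, or \emph{derangement}, of $S$, with $\operatorname{sgn}(\sigma)=\operatorname{sgn}(\tau)$ --- we obtain
\[
c_d=\sum_{\substack{S\subseteq V(G)\\ |S|=d}}\ \sum_{\tau\ \text{a derangement of}\ S}\operatorname{sgn}(\tau)\prod_{i\in S}\bigl(-A_{i,\tau(i)}\bigr).
\]

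Next I would analyze a single derangement $\tau$ of $S$ through its cycle decomposition. If $\tau$ has cycles of lengths $\ell_1,\dots,\ell_m$ (each $\ell_j\geq 2$ and $\sum_j\ell_j=d$), then $\operatorname{sgn}(\tau)=\prod_j(-1)^{\ell_j-1}$ and $\prod_{i\in S}(-A_{i,\tau(i)})=\prod_j(-1)^{\ell_j}\prod_{i\in\text{cycle }j}A_{i,\tau(i)}$. The product $A_{a_1a_2}A_{a_2a_3}\cdots A_{a_\ell a_1}$ along a cycle $(a_1\,a_2\cdots a_\ell)$ is $1$ when the corresponding vertex pairs are all edges of $G$ and $0$ otherwise; for $\ell=2$ this reads $A_{a_1a_2}^{2}=A_{a_1a_2}$, so a transposition survives exactly when $\{a_1,a_2\}$ is an edge. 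Thus a nonzero term corresponds to a subgraph $H$ of $G$ with vertex set $S$ whose components are single edges (from the $2$-cycles) and cycles of length $\geq 3$ (from the longer cycles); that is, $H$ is an elementary subgraph, $H\in\Lambda_d$. Collecting signs, each surviving cycle of length $\ell$ contributes $(-1)^{\ell-1}\cdot(-1)^{\ell}=-1$, so every nonzero term equals $(-1)^{m}=(-1)^{c(H)}$.

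It remains to count multiplicities. For a fixed labeled elementary subgraph $H\subseteq G$ on vertex set $S$, the derangements $\tau$ of $S$ that realize precisely $H$ are obtained by choosing, for each edge-component, its unique transposition and, for each cycle-component of length $\geq 3$, one of its two orientations --- giving exactly $2^{z(H)}$ such $\tau$. Grouping the double sum above first by the labeled subgraph each nonzero term realizes, and then by isomorphism type, turns it into
\[
c_d=\sum_{H\in\Lambda_d}(-1)^{c(H)}\,2^{z(H)}\,[\#H\subseteq G],
\]
which is the asserted formula. The only genuinely delicate point is this last bookkeeping step --- correctly matching permutation cycles with oriented-versus-unoriented subgraph components to produce the factor $2^{z(H)}$, together with the observation that a $2$-cycle traverses its single edge twice (so $A_{a_1a_2}^{2}=A_{a_1a_2}$) whereas a longer cycle uses each of its edges once; the remaining sign arithmetic is routine.
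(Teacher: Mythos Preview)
Your argument is the classical, correct proof of the Harary--Sachs theorem: expand $\det(\lambda I-A)$ via the Leibniz formula, sort permutations by their non-fixed-point set, and read off the sign $(-1)^{c(H)}$ and the multiplicity $2^{z(H)}$ from the cycle structure. The sign bookkeeping and the $2$-cycle/edge identification are handled correctly.

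The paper, however, does not prove Theorem~\ref{T:Harary} this way. It instead \emph{deduces} the classical statement from the hypergraph Harary--Sachs theorem (Theorem~\ref{T:codegreeFormula}) specialized to $k=2$, which yields
\[
c_d=\sum_{H\in\mathcal{V}_d^*}(-1)^{c(H)}C_H(\#H\subseteq G)
\]
as a sum over \emph{Veblen} multigraphs rather than elementary subgraphs. The work is then to show (Lemma~\ref{L:HS}) that the contributions from Veblen graphs whose flattening is not an elementary graph cancel to zero; this in turn rests on a formula for $C_G$ in terms of Euler circuits (Lemma~\ref{L:graphcoef}, proved via the BEST theorem) and an inclusion--exclusion argument invoking Veblen's theorem on decompositions into simple cycles. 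Your route is shorter, self-contained, and avoids all of this machinery; the paper's route is deliberately indirect, its purpose being to certify that Theorem~\ref{T:codegreeFormula} is a faithful generalization of the graph case, and along the way it isolates the cancellation phenomenon (Lemma~\ref{L:HS}) as a result of independent interest.
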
 

An analogous result for the characteristic polynomial of a hypergraph was proven in \cite{Cla0} and we maintain much of the notation and nomencalture therein.  In the same way that the coefficients of the characteristic polynomial of a graph can be expressed in terms of elementary subgraphs, the coefficients of the characteristic polynomial of a hypergraph can be expressed in terms of \textit{Veblen hypergraphs}.

\begin{definition}
A hypergraph is \emph{$k$-uniform} if each edge contains precisely $k$ vertices.  We will refer to $k$-uniform hypergraphs as $k$-graphs.  Moreover, a hypergraph is \emph{$k$-valent} if $k$ divides the degree of each vertex.  A \emph{Veblen hypergraph} is a $k$-uniform, $k$-valent multi-hypergraph.  When $k = 2$ we simply write Veblen graph. 
\end{definition}

Note that a Veblen graph is not necessarily an elementary subgraph (and vice versa).  For example, the 2-cycle (viewed as a multigraph) is not an elementary graph as it is not simple; moreover, the single edge is not a Veblen graph because it is not $2$-valent.  However, these two graphs are related under the \textit{flattening} operation (i.e., the flattening of a 2-cyle is an edge).

\begin{definition}
For a labeled multi-hypergraph $H$, we call  the simple $k$-graph formed by removing duplicate edges of $H$ the \emph{flattening} of $H$ and denote it $\underline{H}$. We say that $H$ is an \emph{infragraph} of ${\cal H}$ if $\underline{H} \subseteq {\cal H}$.  Let ${\cal V}_d({\cal H})$ denote the set of isomorphism classes of connected, labeled Veblen infragraphs with $d$ edges of ${\cal H}$.  We further let ${\cal V}_d^*({\cal H})$ denote the set of isomorphism classes of possibly disconnected, labeled Veblen infragraphs with $d$ edges.
\end{definition}

The proof of the Harary-Sachs theorem makes use of the permutation definition of the determinant to determine which subgraphs have a non-zero contribution to the determinant of the adjacency matrix.  Intuitively, this can be thought of as following disjoint closed walks.  This intuition doesn't translate naturally to the language of hypergraphs, \textit{a priori}; however, it motivates the following definition of a \textit{rooting of a $k$-graph}.

\begin{definition}
The \emph{$u$-rooted directed star of a $k$-uniform edge $e$} is 
\[
S_e(u) = (e,\{uv: v \in e, u \neq v\}).
\]
A \emph{rooting} of a $k$-graph ${\cal H}$ is an ordering $R = (S_{e_{1}}(v_{1})
, S_{e_{2}}(v_{2}), \dots, S_{e_{m}}(v_m))$ such that $E({\cal H}) = \{e_1, \dots, e_m\}$ and $v_i \leq v_{i+1}$.  Given a rooting of $\cal H$ we define the \emph{rooted multi-digraph} of $R$ to be
\[
D_R = \bigcup_{i=1}^m S_{e_i}(v_i)
\]
where the union sums edge multiplicities.  We say that a rooting $R$ is an \emph{Euler rooting} if $D_R$ is Eulerian.  We denote the mutli-set of Eulerian rooted digraphs of ${\cal H}$ as $R({\cal H})$.
\end{definition}

We are now able to determine to what extent each Veblen hypergraph contributes to a particular coefficient in the characteristic polynomial of a given hypergraph.  

\begin{definition}
The \emph{associated coefficient of a connected Veblen hypergraph} $H$ is 
\[
C_H = \sum_{D \in R(H)} \left(\frac{\tau_{D}}{\prod_{v \in V(D)} \deg^-(v)}\right)
\]
 where $\tau(G)$ is the number of arborescences (i.e., the number of rooted subtrees of $G$ with a specified root). The \emph{associated coefficient} of a (possibly disconnected) Veblen hypergraph $H = \bigcup_{i=1}^m G_i$ is 
\[
C_H = \prod_{i=1}^m C_{G_i}.
\]
\end{definition}

The concept of an \emph{associated coefficient of a hypergraph} first appeared in \cite{Coo}.  There, the authors provide a combinatorial description of the codegree-$k$ and codegree-$(k+1)$ coefficient, denoted $c_k$ and $c_{k+1}$ respectively, for the normalized adjacency characteristic polynomial of a $k$-uniform hypergraph. 
\begin{theorem}
\label{T:Codegree-k}
\cite{Coo} Let ${\cal H}$ be a $k$-graph.  Then
\[
c_k=-k^{k-2}(k-1)^{n-k}|E({\cal H})|
\]
and
\[
c_{k+1} = -C_k (k-1)^{n-k}(\#\text{ of simplices in ${\cal H}$}),
\]
 where $C_k$ is some constant depending on $k$.
\end{theorem}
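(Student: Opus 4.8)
The plan is to derive both formulas as immediate applications of the generalized Harary-Sachs theorem of \cite{Cla0}, which expresses $c_d$ as a weighted sum of associated coefficients $C_H$ times flattened-subgraph counts over $H \in {\cal V}_d^*({\cal H})$ (up to a normalization factor coming from the passage to the normalized characteristic polynomial). The key observation driving everything is that the number of edges $d$ of a Veblen infragraph is severely constrained when $d$ is small: since a Veblen hypergraph is $k$-valent, every vertex that appears must have degree a positive multiple of $k$, so it lies in at least $k$ edges (counted with multiplicity). First I would handle $c_k$: the only $k$-valent multi-hypergraphs with exactly $k$ edges are, up to isomorphism, the ``$k$-fold edge'' $S$ consisting of a single $k$-set with multiplicity $k$ (every vertex has degree exactly $k$), since any Veblen infragraph with $k$ edges spanning more than $k$ vertices would force some vertex to have degree $<k$. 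Its flattening $\underline{S}$ is a single edge, so $[\#\underline{S}\subseteq {\cal H}] = |E({\cal H})|$, and the coefficient is $C_S$ times a power of $(k-1)$ from the normalization; a direct computation of $C_S$ via the rooting/arborescence definition yields $-k^{k-2}$ (the arborescence count of the relevant rooted digraph on $k$ vertices is $k^{k-2}$ by Cayley's formula, and $\prod_v \deg^-(v) = k$ in an appropriate sense after accounting for the sign).

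Next I would treat $c_{k+1}$: I need to enumerate all connected and disconnected Veblen infragraphs with $k+1$ edges. A disconnected one would be a disjoint union of smaller Veblen hypergraphs, but the smallest Veblen hypergraph has $k$ edges and any second nontrivial component needs at least $k$ more, so $k+1 < 2k$ for $k \ge 2$ rules out disconnected contributions (the $k+1$ edges must all lie in one component). For connected Veblen hypergraphs with $k+1$ edges, the valency condition plus a counting argument on the number of vertices and incidences shows the only possibility is the $k$-uniform simplex (the complete $k$-graph on $k+1$ vertices, in which every vertex has degree exactly $k$), possibly with some edge multiplicities — but requiring $k+1$ edges total and $k$-valency pins down the simple simplex on $k+1$ vertices. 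Hence $c_{k+1}$ is $(k-1)^{n-k}$ times $C_{\text{simplex}}$ times the number of simplices in ${\cal H}$, and we set $C_k := -C_{\text{simplex}}$; one then notes $C_{\text{simplex}} \ne 0$ so the constant is genuinely nonzero, though its exact value is deferred (indeed, computing it explicitly is one of the stated goals of the paper).

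The main obstacle is the combinatorial classification step for $c_{k+1}$: showing rigorously that the simplex on $k+1$ vertices is the \emph{only} connected $k$-valent multi-hypergraph with exactly $k+1$ edges. This requires an argument of the following shape: if $H$ has $k+1$ edges and $t$ vertices, then $\sum_v \deg(v) = k(k+1)$, and $k$-valency forces $\sum_v \deg(v) \ge kt$, so $t \le k+1$; on the other hand connectivity and $k$-uniformity with $k+1$ edges force $t \ge k+1$ unless heavy edge multiplicity occurs, and a short case analysis on multiplicity (any repeated edge contributes $2k$ to the degree sum concentrated on $k$ vertices, quickly exceeding the budget $k(k+1)$ while leaving too few edges to connect the rest) eliminates all but the simple simplex. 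I would also need to verify that the normalization constant from \cite{Cla0} converting the adjacency characteristic polynomial to the normalized one contributes exactly the factor $(k-1)^{n-k}$ in both cases — this is bookkeeping but must be stated carefully. With the classification in hand, both displayed formulas follow by reading off the Harary-Sachs expansion term by term.
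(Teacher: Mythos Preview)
Your approach is correct and is exactly how the paper (and the original reference \cite{Coo}) obtains these formulas: the paper cites the classification of Veblen $k$-graphs with $k$ and $k+1$ edges from \cite{Coo} and then re-derives both coefficients as one-term instances of Theorem~\ref{T:codegreeFormula}, just as you outline (see the Remark in Section~3 and the $k=3$ verification at the start of Section~4).

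One computational slip to fix: for the $k$-fold edge $S$, the unique Euler rooting has every vertex rooted exactly once, so the rooted digraph is the complete digraph $\vec{K}_k$ with $\deg^-(v)=k-1$ for each $v$; hence $\prod_v \deg^-(v)=(k-1)^k$, not $k$. Combined with $\tau_{\vec{K}_k}=k^{k-2}$ (Cayley), this gives $C_S=k^{k-2}/(k-1)^k$, and then $c_k=-(k-1)^n C_S\,|E({\cal H})|=-k^{k-2}(k-1)^{n-k}|E({\cal H})|$. So the $(k-1)^{-k}$ you need does not come from a separate ``normalization'' step but is already baked into $C_S$ via the in-degree product. With that correction your derivation goes through verbatim; your vertex-count/degree-sum argument pinning down the simplex for $d=k+1$ is also the right one (and is what \cite{Coo} does).
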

This idea was further studied by Shao, Qi, and Hu where the authors prove (restating Theorem 4.1 of \cite{Sha}),
\[
c_d = (k-1)^{n-1} \sum_{D \in {\bf D}} f_D|\mathfrak{E}(D)|
\]
where ${\bf D}$ is a certain large family of digraphs, $f_D$ is a function of $D$ and $\mathfrak{E}(D)$ is the set of Euler circuits in $D$.  The authors then use their formula to provide a general description of $\Tr_2(T)$ and $\Tr_3(T)$ for a general tensor $T$. 

The enterprise of this paper is to continue this line of research by determining explicit and asymptotic formulas for $C_k$ as well as formula for $Tr_2(T), \dots, Tr_6(T)$ where $T$ is the normalized adjacency hypermatrix of a hypergraph.  To do so, we leverage the generalized Harary-Sachs theorem.

\begin{theorem}(\cite{Cla0})
\label{T:codegreeFormula}
For a simple $k$-graph $\cal H$, 
\[
c_d = \sum_{H \in {{\cal V}^*_d}({\cal H})}(-(k-1)^n)^{c(H)}C_H(\# H \subseteq {\cal H})
\]
where $c(H)$ is the number of components of $H$, $C_H$ is a constant depending only on $H$, and $(\# H \subseteq {\cal H})$ is the number of times $H$ occurs (in a certain sense that is a minor generalization of the subgraph relation) in ${\cal H}$.
\end{theorem}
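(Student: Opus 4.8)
The plan is to obtain the formula from the generating function expressing the characteristic polynomial in terms of the higher-order traces of the adjacency hypermatrix, and then to read off the Veblen structure from a combinatorial expansion of those traces. Recall that $\phi_{\cal H}(\lambda)$ is defined, via the resultant, from the eigenvalue system of the adjacency hypermatrix $\A$, and is a polynomial in $\lambda$ of degree $N:=n(k-1)^{n-1}$. The first step is to invoke the Morozov--Shakirov ``$\log\det=\tr\log$'' identity for resultants, in the hypergraph-adapted form used in the works of Cooper--Dutle and of Shao--Qi--Hu, namely
\[
\lambda^{-N}\,\phi_{\cal H}(\lambda)=\exp\!\left(-\sum_{d\ge 1}\frac{\Tr_d(\A)}{d}\,\lambda^{-d}\right),
\]
the tensor analogue of $\det(I-tM)=\exp(-\sum_{d\ge1}\tr(M^d)t^d/d)$. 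Extracting the coefficient of $\lambda^{N-d}$ reduces the theorem to two subproblems: a combinatorial evaluation of $\Tr_d(\A)$, and the passage from connected to disconnected contributions effected by the exponential.

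For the first subproblem I would expand $\Tr_d(\A)$ as a sum of products of entries of $\A$ over admissible index sequences. Since a nonzero entry of $\A$ records an edge together with a distinguished ``outgoing'' vertex, each surviving monomial selects an ordered list of $d$ edges of ${\cal H}$ with repetition, each carrying a root --- that is, a $u$-rooted directed star $S_e(u)$ --- and assembling these stars yields a rooted multi-digraph $D_R$. The index contractions force in-degree to equal out-degree at every vertex, so the underlying multi-hypergraph is $k$-uniform and $k$-valent: a connected Veblen infragraph $H$ of ${\cal H}$, and the monomials that survive are precisely the Euler rootings in $R(H)$. Summing the scalar weights of these monomials, the number of distinct orderings realizing a fixed $D_R$ is governed by the BEST theorem (the number of Euler circuits of $D_R$ equals $\tau_{D_R}\prod_v(\deg^-(v)-1)!$), while the residual multinomial and relabeling factors contribute $d\prod_v\deg^-(v)$; after cancellation each connected labeled $H$ is left contributing $\frac1d$ times $\sum_{D\in R(H)}\tau_D/\prod_v\deg^-(v)=C_H$, times a power of $k-1$ recording the ``diagonal'' contractions that touch no edge.

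The second subproblem is then formal. Having rewritten $\sum_{d\ge1}\Tr_d(\A)\lambda^{-d}/d$ as a sum over connected labeled Veblen infragraphs of ${\cal H}$ weighted by $C_H$ and a power of $k-1$, exponentiation turns it into a sum over finite multisets of such infragraphs, i.e.\ over all of ${\cal V}^*_d({\cal H})$; the multiplicativity $C_{\bigcup_i G_i}=\prod_i C_{G_i}$ is exactly what the exponential produces, each component picks up a single factor of $-(k-1)^{n}$ (the sign from the exponent together with the normalization power collected per component), and the labeled count collapses to $(\#H\subseteq{\cal H})$. Matching the coefficients of $\lambda^{-d}$ yields $c_d=\sum_{H\in{\cal V}^*_d({\cal H})}(-(k-1)^n)^{c(H)}C_H(\#H\subseteq{\cal H})$. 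Alternatively, one may start from the Shao--Qi--Hu expansion $c_d=(k-1)^{n-1}\sum_{D\in{\bf D}}f_D|\mathfrak{E}(D)|$ and regroup the digraphs $D$ by their underlying Veblen hypergraph, again invoking the BEST theorem to convert $|\mathfrak{E}(D)|$ into the arborescence expression defining $C_H$.

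The main obstacle is the bookkeeping of the second paragraph: showing that the algebraic weight attached to a connected Veblen infragraph in the trace expansion is \emph{exactly} $C_H$, with the correct power of $k-1$. Two points are delicate. First, one must set up a correspondence between the nonzero monomials of $\Tr_d(\A)$ and Euler rootings that correctly handles the ordering constraint $v_i\le v_{i+1}$, the identification of rooted digraphs up to isomorphism, and the overall factor $1/d$, so that the matrix-tree count produces $\tau_D/\prod_v\deg^-(v)$ and not a variant of it. Second, one must disentangle the powers of $k-1$, separating genuine edge contractions from diagonal ones and reconciling the global degree $N=n(k-1)^{n-1}$ with the per-component factor $-(k-1)^n$ --- which is where the normalization of the adjacency hypermatrix enters essentially. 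Once the connected weights are pinned down, the rest is the standard exponential-formula manipulation and a sign chase.
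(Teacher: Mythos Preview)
The paper does not contain a proof of this theorem: it is stated with the citation \cite{Cla0} and used as a black box throughout, so there is no ``paper's own proof'' to compare against. Your proposal is therefore not competing with anything in this document; it is an attempt to reconstruct the argument of the cited reference.

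That said, your sketch is broadly in line with how the result is actually established in \cite{Cla0}: one starts from the Morozov--Shakirov/Hu--Huang--Ling--Qi trace expansion of the resultant, interprets the nonvanishing monomials of $\Tr_d(\A)$ as Euler rootings of Veblen infragraphs, applies the BEST theorem to collapse the circuit counts into the arborescence quantity defining $C_H$, and then uses the exponential formula to pass from connected to disconnected contributions, picking up one factor of $-(k-1)^n$ per component. You have correctly flagged the two genuinely delicate points --- the bijection between trace monomials and Euler rootings (with the ordering convention and the $1/d$ factor), and the bookkeeping of the powers of $k-1$ --- and these are precisely where the work lies in \cite{Cla0}. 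Your alternative of starting from the Shao--Qi--Hu formula and regrouping by underlying Veblen hypergraph is also viable and amounts to the same computation entered from a different door. As a proof \emph{proposal} this is accurate in shape; as a proof it would still require the careful monomial-by-monomial accounting that you acknowledge is missing.
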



Finally, we formalize how we enumerate the occurrences of a given infragraph within a hypergraph.

\begin{definition}
  For a $k$-graph ${\cal H}$ and a Veblen $k$-graph $H = \bigcup_{i=1}^m G_i$ we define 
\[
(\#H \subseteq {\cal H}) =\frac{1}{|\Aut(H)|}\prod_{i=1}^m|\Aut(\underline{G_i})||\{S \subseteq {\cal H} : S \cong \underline{G_i}\}|.
\]
\end{definition}

The paper is arranged as follows. We begin by showing that Theorem \ref{T:codegreeFormula} is a faithful generalization of the Harary-Sachs theorem in the sense that it yields the same conclusion as the traditional Harary-Sachs theorem in the case when $k=2$.  Using this theorem we answer a question of $\cite{Coo}$ by providing a computationally efficient formula for $C_H$ in the case where $H$ is the $k$-uniform simplex, i.e., $K_{k+1}^{(k)}$.  We further answer a question of \cite{Hu2} by applying our explicit formula for higher order traces to the computation of the characteristic polynomial of a hypergraph.  Moreover, we provide an example of how the leading coefficients of $\phi({\cal H})$ can be used to determine all of its coefficients.  We conclude with a conjecture concerning the multiplicity of the zero-eigenvalue.

\section{Deducing the Harary-Sachs Theorem for Graphs}

With Theorem \ref{T:codegreeFormula} in hand we can express the codegree-$d$ coefficient of the normalized adjacency characteristic polynomial of a hypergraph as a weighted sum of Veblen infragraphs with $d$ \emph{edges}.  In the case when $G$ is a 2-graph our theorem simplifies to
\[
c_d = \sum_{H \in {{\cal V}^*_d}}(-1)^{c(H)}C_H(\# H \subseteq G).
\]
Note that the Harary-Sachs theorem (Theorem \ref{T:Harary}) expresses the codegree-$d$ coefficient as a weighted sum over certain subgraphs on $d$ \emph{vertices}. 

Recall that an \emph{elementary subgraph} of a graph $G$ is a simple subgraph of $G$ whose components are edges or cycles (see \cite{Big} for further details). In keeping with their notation we let $\Lambda_d(G)$ be the set of elementary subgraphs of $G$ with $d$ vertices.  Observe that a connected elementary graph is the flattening of a cycle (e.g., the flattening of a 2-cycle is an edge).  Note that cycles (and disjoint unions of cycles) are the only two regular non-empty graphs which have an equal number of vertices and edges.  Indeed
\[
\Lambda_d(G) \subseteq \{\underline{H} : H \in {\cal V}^*_d(G)\}.
\]
By straightforward computation we have that the associated coefficient of a 2-cycle is $1$ and the associated coefficient of a simple cycle (i.e., any cycle which is not a 2-cycle) is $2$.  Restricting our attention to $\Lambda_d(G)$, we have by Theorem \ref{T:codegreeFormula}
\begin{equation}
\label{E:Harary}
\sum_{H \in \Lambda_d(G)}(-1)^{}C_H(\#H \subseteq G) = (-1)^{c(H)}2^{z(H)}(\#H \subseteq G)
\end{equation}
where $z(H)$ is the number of cycles in $H$.  Note that Equation \ref{E:Harary} is the conclusion of the Harary-Sachs theorem.  We deduce the Harary-Sachs theorem from Theorem \ref{T:codegreeFormula} by showing that the summands of
\[
c_d = \sum_{H \in {{\cal V}^*_d}}(-1)^{c(H)}C_H(\# H \subseteq G)
\]
which \emph{do not arise from elementary graphs} sum to zero.  We make this statement precise with the following.

\begin{definition}
For a multigraph $G$ and an edge $e \in E(G)$, write $m(e) = m_G(e)$ for the multiplicity of $e$ in $E(G)$.  Let $G$ be a connected, labeled Veblen graph with distinguishable multi-edges.  Given a multiset $P$ of multigraphs whose multi-edges are indistinguishable, each on the vertex set $V(G)$, we write $P \vdash G$ if $P$ partitions the edges of $G$ in the sense that for each $e \in E(G)$
\[
\sum_{P_i \in P} m_{P_i}(e) = m_G(e).
\]
\end{definition}

\begin{lemma}
\label{L:HS}
 \begin{displaymath}
   \sum_{P \vdash G}(-1)^{c(P)}C_P =  \left\{
     \begin{array}{ll}
       1 & :\text{$G$ is a 2-cycle} \\
       2 & :\text{$G$ is a simple cycle} \\
       0 & :\text{otherwise.}
     \end{array}
   \right.
\end{displaymath} 
\end{lemma}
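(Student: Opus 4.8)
The plan is to reduce the identity to a cancellation statement and then kill the remaining terms with a sign‑reversing involution. First observe that a block $P_i$ of a partition $P\vdash G$ contributes nothing unless each connected component of $P_i$ is $2$‑valent: otherwise that component admits no Euler rooting, so $C_{P_i}$ — and hence $C_P$ — vanishes. Thus only partitions of $E(G)$ into connected Veblen blocks survive. When $G$ is a $2$‑cycle or a simple cycle this forces $P=\{G\}$, since a single copy of a doubled edge, or any proper nonempty subgraph of a cycle, fails to be $2$‑valent; the sum then collapses to $C_G$, which is $1$ and $2$ respectively by the associated‑coefficient computations already quoted in the text. So the real content of the lemma is the vanishing statement: for a connected Veblen $G$ that is not a single cycle, $\sum_{P\vdash G}(-1)^{c(P)}C_P=0$.

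For the vanishing statement I would first re‑express the associated coefficient in a more usable form: applying the BEST theorem to trade each arborescence count $\tau_D$ for a count of Euler circuits, and accounting for how many rootings in $R(H)$ produce a given Eulerian orientation, one obtains a formula for $C_H$ as a (suitably normalized) count of Euler‑circuit structures on $H$. Expanding $C_P=\prod_i C_{P_i}$ with such a formula turns $\sum_{P\vdash G}(-1)^{c(P)}C_P$ into a signed sum over systems $(W_1,\dots,W_t)$ of closed walks that together use every edge of $G$ exactly once, weighted by $\pm1$ according to the parity of $t$. Because $G$ is connected, every such system with $t\ge 2$ has two walks meeting at a common vertex, and every system with $t=1$ is an Euler circuit of $G$, which revisits some vertex (as $G$ has a vertex of degree $\ge 4$ when it is not a single cycle). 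In either situation there is a canonical ``defect'' vertex at which one can splice two walks into one or split one walk into two, changing $t$ by exactly one and flipping the sign. Alternatively one can run this inductively on $|E(G)|$, peeling off a proper sub‑cycle $Z\subsetneq G$ (which exists since an Eulerian $G$ that is not a single cycle decomposes into $\ge2$ edge‑disjoint cycles) and splitting the sum according to whether $Z$ is itself a block.

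The main obstacle is making the splice/split map a genuine fixed‑point‑free involution: one must fix a linear order on the vertices and on the edges at each vertex, choose the least defect vertex, splice or split there at the lexicographically smallest incident pair of edges, and then verify that this vertex and edge‑pair are recovered by the inverse move and remain minimal afterward — that bookkeeping is where essentially all the work lies. It is also where the precise normalization of $C_H$ must be pinned down, since $C$ is subtler than naive multiplicativity (it is not multiplicative over edge‑disjoint unions that share two or more vertices, as the case $G=K_{2,4}$ shows, yet the constants $1$ and $2$ must come out correctly for the cycle cases). A cut‑vertex reduction disposes of blocks attached to the rest of $G$ at a single vertex and simplifies the picture, but it does not settle the $2$‑connected case, so the involution — or an equivalent alternating‑sum identity over closed‑walk systems — appears to be unavoidable.
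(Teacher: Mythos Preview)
Your outline shares its first half with the paper: both dispatch the cycle cases by observing that only $P=\{G\}$ survives, and both invoke the BEST theorem to convert $C_H$ for a connected Veblen graph into a normalized Euler-circuit count (the paper states this as a separate lemma, $C_H=|\mathfrak{E}(H)|/\prod_e m(e)!$). After that the two arguments diverge. The paper multiplies through by $\prod_e m_G(e)!$ so that the alternating sum becomes $\sum_{P\vdash G}(-1)^{c(P)}|\mathfrak{E}(P)|\prod_e\binom{m_G(e)}{P(e)}$, recognizes this as an inclusion--exclusion count of \emph{indecomposable} Euler circuits of $G$ (those not expressible as an edge-disjoint union of two or more closed subwalks), and then invokes Veblen's theorem: since $G$ is Eulerian but not a single cycle, every Euler circuit decomposes into simple cycles, so there are no indecomposable ones and the sum vanishes. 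Your proposal instead builds a sign-reversing involution on closed-walk systems by splicing/splitting at a canonical defect vertex. That is a legitimate route and encodes the same underlying phenomenon, but it trades the paper's one-line appeal to Veblen's theorem for exactly the bookkeeping you yourself flag as ``where essentially all the work lies''; the inclusion--exclusion reframing simply sidesteps that bookkeeping.

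One remark: your worry about $K_{2,4}$ and non-multiplicativity is a red herring. The quantity $C_P$ is \emph{defined} as $\prod_i C_{P_i}$ over the blocks of the partition, so there is nothing to check about multiplicativity across vertex-sharing pieces; and the normalization you say ``must be pinned down'' is precisely the formula $C_H=|\mathfrak{E}(H)|/\prod_e m(e)!$, after which the multinomial factors $\binom{m_G(e)}{P(e)}$ appear automatically and the signed sum really is a $\pm1$-weighted count of edge-disjoint closed-walk decompositions of $G$ with distinguishable edges.
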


\begin{remark}
We refer to Veblen 2-graphs as Veblen graphs.  An \emph{Euler orientation} of a graph $G$ is an orientation of the edges of $G$ such that the resulting digraph is Eulerian. 
\end{remark} 

Before proving Lemma \ref{L:HS} we provide a combinatorial formula for the associated coefficient of a Veblen graph. 

\begin{lemma}
\label{L:graphcoef}
Let $G$ be a connected Veblen graph and let $\mathfrak{E}(G)$ be the set of Euler circuits of $G$.  We have
\[
C_G = \frac{|\mathfrak{E}(G)|}{\prod_{e\in E(G)}m(e)!}.
\]
\end{lemma}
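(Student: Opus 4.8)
The plan is to unwind the definition of the associated coefficient $C_G = \sum_{D \in R(G)} \tau_D / \prod_{v} \deg^-(v)$ and identify the sum over Euler rootings with the set of Euler circuits of $G$, using the BEST theorem to handle the arborescence count. First I would observe that for a connected Veblen graph $G$ — which, being $2$-valent, has every vertex of even degree — every Euler orientation yields an Eulerian digraph $D$ in which $\deg^-(v) = \deg^+(v) = \deg_G(v)/2$ for every vertex. The BEST theorem then says that the number of Euler circuits of the digraph $D$ equals $\tau_D \cdot \prod_{v}(\deg^-(v) - 1)!$, where $\tau_D$ is the number of arborescences rooted at any fixed vertex (this count is independent of the root for a connected Eulerian digraph). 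Hence $\tau_D / \prod_v \deg^-(v) = |\mathfrak{E}(D)| / \prod_v \deg^-(v)!$, where $\mathfrak{E}(D)$ is the set of directed Euler circuits of $D$.

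Next I would reconcile the indexing set $R(G)$ — Euler \emph{rootings}, which are orderings of $u$-rooted directed stars $S_{e_i}(v_i)$ with $v_i \le v_{i+1}$ whose union is Eulerian — with the set of Euler orientations of $G$. In the $k=2$ case each edge $e = \{u,v\}$ has exactly two rooted stars, $S_e(u)$ (which is the arc $u \to v$) and $S_e(v)$ (the arc $v \to u$), so choosing a rooted star for each edge is exactly choosing an orientation; the requirement that $D_R$ be Eulerian is exactly the condition that the orientation be an Euler orientation; and the sorting condition $v_i \le v_{i+1}$ makes the ordering canonical, so rootings are in bijection with Euler orientations (as multisets, accounting for the fact that $R(G)$ is a multiset when edges have multiplicity $>1$ — here the distinguishable multi-edges of $G$ matter). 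Summing $|\mathfrak{E}(D_R)| / \prod_v \deg^-(v)!$ over Euler orientations $R$, I then want to show this equals $|\mathfrak{E}(G)| / \prod_{e} m(e)!$, where $\mathfrak{E}(G)$ denotes undirected Euler circuits of the multigraph $G$.

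The bridge between directed and undirected Euler circuits is the main combinatorial point. An undirected Euler circuit of $G$ traverses each (distinguishable) edge once and thereby \emph{induces} an orientation of $G$, which is automatically an Euler orientation; conversely, given an Euler orientation $D$, its directed Euler circuits are precisely those undirected Euler circuits of $G$ that induce that orientation. So $\sum_{D} |\mathfrak{E}(D)|$, summed over Euler orientations, counts undirected Euler circuits of $G$ with distinguishable multi-edges — but I have to be careful that the denominators $\prod_v \deg^-(v)!$ are not constant across orientations in general. In fact for a $2$-valent graph $\deg^-(v) = \deg_G(v)/2$ for \emph{every} Euler orientation, so the denominator $\prod_v (\deg_G(v)/2)!$ is the same for all terms and factors out; what remains is $\big(\sum_D |\mathfrak{E}(D)|\big) / \prod_v(\deg_G(v)/2)!$. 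It then remains to check the bookkeeping identity $|\mathfrak{E}(G)| \cdot (\text{something}) = \sum_D |\mathfrak{E}(D)| \cdot \prod_e m(e)! / \prod_v (\deg_G(v)/2)!$ relating the "distinguishable multi-edge" Euler circuit count to the plain count $|\mathfrak{E}(G)|$, and to confirm the conventions for starting point/direction of a circuit match those used implicitly in the statement.

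The step I expect to be the main obstacle is precisely this last matching of normalization conventions — exactly how $\mathfrak{E}(G)$ is counted (rooted at a vertex or not, directed or not, with distinguishable multi-edges or not) and verifying that the factors $\prod_e m(e)!$ versus the arborescence denominators line up — since BEST-theorem arguments are notoriously sensitive to whether one fixes a starting arc. I would pin this down by first treating the simple-graph case (all $m(e)=1$), where the identity $C_G = |\mathfrak{E}(G)|$ should drop out cleanly from BEST plus the orientation/circuit correspondence, and then handling multi-edges by a symmetrization argument: making the $m(e)$ copies of each edge distinguishable multiplies the directed Euler circuit count by $\prod_e m(e)!$ and this is exactly cancelled by the $1/\prod_e m(e)!$ in the claimed formula, leaving the already-established simple-graph identity.
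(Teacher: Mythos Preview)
Your overall strategy --- apply BEST to each Euler rooting, pass to Euler orientations, and identify the resulting sum with the undirected Euler-circuit count --- is exactly the paper's approach. The gap is in the passage from rootings to orientations.

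The claim that the sorting condition $v_i \le v_{i+1}$ ``makes the ordering canonical, so rootings are in bijection with Euler orientations'' is false. The constraint only sorts rooted stars by their root; within a block of stars sharing the same root $v$ the order is free, so a single Euler orientation $D$ corresponds not to one rooting but to
\[
|[R]| \;=\; \prod_{v} \binom{d_v}{N_D(v)} \;=\; \prod_v \frac{d_v!}{\prod_u \deg_D(v,u)!},
\]
where $d_v=\deg_G(v)/2$. This multiplicity is generally larger than $1$ and, crucially, depends on $D$, so it cannot be absorbed into a global normalization of $|\mathfrak{E}(G)|$. The paper tracks exactly this factor: combining it with BEST gives
\[
C_G \;=\; \sum_{D}\frac{|\mathfrak{E}(D)|}{\prod_v\prod_u \deg_D(v,u)!},
\]
and then the identity $\prod_v\prod_u \deg_D(v,u)! \cdot \prod_{uv}\binom{m(uv)}{\deg_D(u,v)} = \prod_e m(e)!$ is what produces the denominator $\prod_e m(e)!$.

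Your fallback plan --- prove the simple case first and then symmetrize over multi-edges --- does not sidestep the issue: for a simple Veblen graph (all $m(e)=1$) one has $|[R]|=\prod_v d_v!$, which is $>1$ as soon as some vertex has degree at least $4$, so the bijection already fails there. If you incorporate the correct fibre sizes from the start, the simple case collapses immediately to $C_G=\sum_D|\mathfrak{E}(D)|=|\mathfrak{E}(G)|$ and the multi-edge case follows by the same multinomial bookkeeping the paper carries out; but the ``bookkeeping identity'' you anticipate is not a matter of Euler-circuit conventions, it is precisely this rooting-multiplicity factor.
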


Our proof of Lemma \ref{L:graphcoef} makes use of the BEST Theorem.

\begin{theorem}
\label{T:BEST}
(BEST Theorem, \cite{Aar}) The number of Euler circuits in a connected Eulerian graph $G$ is 
\[
|\mathfrak{E}(G)| = \tau(G) \prod_{v \in V} (\deg(v) -1)!
\]
 where $\tau(G)$ is the number of arborescences (i.e., the number of rooted subtrees of $G$ with a specified root).
\end{theorem}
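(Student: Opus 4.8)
The plan is to prove the BEST Theorem by exhibiting an explicit bijection between the Euler circuits of $G$ and pairs consisting of a spanning in\mbox{-}arborescence together with a compatible local ordering of the out-edges at each vertex; counting the latter set directly yields the stated product formula. Throughout, $G$ is a connected Eulerian (multi)digraph---the directed reading forced by the appearances of $\deg^-$ and of rooted directed stars above---so that $\deg^+(v)=\deg^-(v)=\deg(v)$ at every vertex, and $\tau(G)$ denotes the number of arborescences directed toward a fixed root $w$. I would first record that by a standard matrix-tree (balanced Laplacian) argument this count does not depend on the choice of $w$, so that $\tau(G)$ is well defined. I then count Euler circuits with starting vertex $w$ and a fixed first out-edge $e_0$; this is exactly the quantity $|\mathfrak{E}(G)|$ (each cyclic circuit uses $e_0$ once), so recovering it suffices.

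First I would construct the forward map, the \emph{last-exit tree}. Given an Euler circuit $C$ written to begin and end at $w$, for each $v\neq w$ let $L(v)$ be the last edge of $C$ leaving $v$. Since each $v\neq w$ has exactly one such out-edge and $w$ has none, the edges $\{L(v):v\neq w\}$ form a functional graph on $V\setminus\{w\}$. The key sub-step is to show this functional graph is acyclic, hence an in-arborescence rooted at $w$: if the $L$-edges contained a directed cycle, I would take the vertex on that cycle whose last exit occurs latest along $C$ and observe that after that moment $C$ can never re-enter its predecessor on the cycle, contradicting that the predecessor's last exit is later still. Recording in addition the order in which $C$ uses the out-edges at each vertex yields a system of local orderings in which $L(v)$ is automatically last at every $v\neq w$.

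Next I would construct the inverse map, the \emph{reconstruction}. Given an in-arborescence $T$ toward $w$ and, at each vertex, a linear order on its out-edges with the unique $T$-edge placed last whenever $v\neq w$, define a walk from $w$ that, upon each arrival at a vertex, departs along the first as-yet-unused out-edge in the prescribed order. I would show this walk is a full Euler circuit in two stages. Stage one: the walk can halt only at $w$, since each entry into a vertex $v\neq w$ consumes an in-edge and, by degree balance, leaves an unused out-edge available unless all of $v$'s out-edges are already spent---which by the tree-edge-last rule means $v$'s $T$-edge was its final departure. Stage two, the crux: when the walk halts at $w$ it has traversed \emph{every} edge; if some edge were unused there would be a vertex $u$ with an unused out-edge, and following $T$-edges from $u$ toward $w$ together with the tree-edge-last condition forces $u$ to have been revisited after its $T$-edge was used, contradicting that the $T$-edge is last. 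Verifying that these two maps are mutually inverse completes the bijection.

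Finally I would assemble the count: choosing an Euler circuit with fixed first edge amounts to choosing an arborescence $T$ (there are $\tau(G)$ of them) and then freely ordering the remaining out-edges---the $\deg(w)-1$ non-initial out-edges at $w$ and, at each $v\neq w$, the $\deg(v)-1$ out-edges other than the $T$-edge---giving $(\deg(v)-1)!$ choices at every vertex and hence $\tau(G)\prod_{v}(\deg(v)-1)!$ circuits in total. The main obstacle is Stage two of the reconstruction, namely verifying that following the local orders never strands unused edges in a separate region of the digraph; this is exactly where the tree-edge-last convention and the connectivity encoded by the arborescence are indispensable, and it is the heart of the argument.
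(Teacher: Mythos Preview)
The paper does not supply a proof of Theorem~\ref{T:BEST}; it is quoted with a citation and then invoked as a black box in the proof of Lemma~\ref{L:graphcoef}. There is therefore nothing in the paper to compare your argument against.

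For what it is worth, your outline is the standard last-exit-tree bijection and is essentially correct. One small slip: in the acyclicity step you should be tracking the \emph{successor} of the latest-exit vertex $u$ on the putative $L$-cycle, not its predecessor. After traversing $L(u)$ the circuit arrives at the head $u'$ of $L(u)$; since $u$ was chosen with latest last exit among cycle vertices, $L(u')$ has already been used, so $u'$ has no remaining out-edges and the circuit would have to terminate at $u'\neq w$, a contradiction. Your sentence about the predecessor's last exit being ``later still'' is backwards (by choice of $u$ it is earlier), but the intended mechanism is clear and the rest of the bijection and the final count are fine.
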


We now prove Lemma \ref{L:graphcoef}.

\begin{proof}
Let $G$ be a Veblen graph.  Since $G$ is Eulerian, we write $\deg(v) = 2d_v$ for convenience.  Because $\deg^-(v) = d_v$ for all $D \in R(G)$ we have
\[
C_G = \frac{1}{\prod_{v \in V(G)}d_v} \sum_{D \in R(G)}\tau_{D}.
\]
By the BEST theorem (i.e, Theorem \ref{T:BEST}) we have 
\[
\tau_D = \frac{|\mathfrak{E}(D)|}{\prod_{v \in V(G)}(d_v-1)!}.
\]
Let $N_D(v)$ denote the out-neighborhood of $v$ in $D$ and let $\deg_D(v,u)$ denote the number of edges directed from $v$ to $u$ in $D$.  We denote
\[
\binom{d_v}{N_D(v)} = \frac{d_v!}{\prod_{u \in N_D(v)}\deg_D(v,u)!}
\]
which is the number of linear orderings of out-edges of $v$ in an Euler orientation $D$.  Consider the equivalence relation $\sim_{R(G)}$ where $R \sim R'$ if and only if $D_{R} = D_{R'}$.  Note that $\sim$ identifies two Euler rootings if their associated digraphs are the same Euler orientation.  Let $[R]$ denote the equivalence class of $R$ under $\sim$.  Suppose $R(G) = \bigcup_{i=1}^t[R_i]$ and note 
\[
|[R]| = \prod_{v \in V(G)} \binom{d_v}{N_{D_R}(v)}
\]
as two rootings in $[R]$ differ only in the ordering of the $u$-rooted stars for $u \in V(G)$.
Let ${\cal O}(G) = \{D_{1}, \dots, D_{t}\}$, where $D_i = D_{R_i}$, denote the Euler orientations of $G$.  For convenience, we write $N_i$ for $N_{D_i}$.  We equate
\[
\sum_{D \in R(G)} \tau_D = \sum_{i=1}^t \tau_{i} |[R_i]| = \sum_{i=1}^t \left(\tau_i \prod_{v \in V(G)} \binom{d_v}{N_{i}(v)} \right).
\]
Substitution and simplification yields
\[
C_G = \sum_{i =1}^t \frac{|\mathfrak{E}(D_i)|}{\prod_{v \in V(G)}\left(\prod_{u \in N_i(v)} \deg_i(v,u)!\right)}.
\]
Since $\deg_i(u,v) + \deg_i(v,u) = m(uv)$ we have
\[
\binom{m(uv)}{\deg_i(u,v)} = \frac{m(uv)!}{\deg_i(u,v)!\deg_i(v,u)!} 
\]
so that
\begin{align*}
\frac{\prod_{e \in E(G)}m(e)!}{\prod_{v \in V(G)} \prod_{u \in V(G)} \deg_i(v,u)!} &= \frac{\prod_{e \in E(G)}m(e)!}{\prod_{u < v \in V(G)} \deg_i(u,v)!\deg_i(v,u)!}\\
&= \prod_{uv \in E(G), u < v} \binom{m(uv)}{\deg_i(u,v)}.
\end{align*}
Then
\begin{align*}
C_G &= \sum_{i=1}^t \frac{|\mathfrak{E}(D_i)|}{\prod_{v \in V(G)}\left(\prod_{u \in N_i(v)} \deg_i(v,u)!\right)} \\
&= \frac{\prod_{e \in E(G)}m(e)!}{\prod_{e \in E(G)} m(e)!}\sum_{i=1}^t \frac{|\mathfrak{E}(D_i)|}{\prod_{v \in V(G)}\left(\prod_{u \in N_i(v)} \deg_i(v,u)!\right)}\\
&= \frac{1}{\prod_{e \in E(G)}m(e)!}\sum_{i=1}^t \left(\prod_{uv \in E(G), u < v}\binom{m_G(uv)}{\deg_i(u,v)}\right)|\mathfrak{E}(D_i)|\\
&=\frac{|\mathfrak{E}(G)|}{\prod_{e \in E(G)}m(e)!}
\end{align*}
where the last equality follows from the observation that $D_i$ has indistinguishable multi-edges.
\end{proof}

We now prove Lemma \ref{L:HS} with the aid of Theorem \ref{T:Veblen}.

\begin{theorem}
\label{T:Veblen}
(Veblen's Theorem, \cite{Veb}) The set of edges of a finite graph can be written as a union of disjoint simple cycles if and only if every vertex has even degree.
\end{theorem}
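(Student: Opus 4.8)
The plan is to prove the two directions of the biconditional separately, with the reverse implication carried by induction on the number of edges. The forward direction is immediate and I would dispatch it first: if $E(G)$ is the disjoint union of simple cycles $C_1, \dots, C_r$, then each vertex $v$ has degree $0$ or $2$ in each $C_j$, so $\deg_G(v) = \sum_{j=1}^r \deg_{C_j}(v)$ is a sum of even numbers and hence even. No machinery is needed here.

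For the reverse direction, I would argue that if every vertex of $G$ has even degree, then $E(G)$ decomposes into edge-disjoint simple cycles, by induction on $|E(G)|$. The base case $|E(G)| = 0$ is the empty union of cycles. For the inductive step, the essential ingredient is the following claim: any graph in which every vertex has even degree and which has at least one edge contains a simple cycle. Granting this claim, I would extract such a cycle $C$, delete its edges to form $G'$, and observe that deleting a cycle lowers the degree of each vertex by $0$ or $2$, so every vertex of $G'$ still has even degree. Since $|E(G')| < |E(G)|$, the inductive hypothesis writes $E(G')$ as a disjoint union of simple cycles, and appending $C$ yields the desired decomposition of $E(G)$.

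The one substantive step, and the main obstacle, is proving that a graph with all degrees even and at least one edge contains a simple cycle. I would prove this with a longest-path argument. Restricting to the subgraph induced by the vertices of positive degree (where every vertex now has degree at least $2$, since its degree is both positive and even), take a path $P = v_0 v_1 \cdots v_\ell$ of maximum length. Because $v_\ell$ has degree at least $2$, it has a neighbor besides $v_{\ell-1}$; and because $P$ is a longest path, every neighbor of $v_\ell$ already lies on $P$. Hence $v_\ell$ is adjacent to some $v_j$ with $j \le \ell - 2$, and $v_j v_{j+1} \cdots v_\ell v_j$ is a simple cycle. This closes the claim.

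Once the claim is in hand, the induction runs without further difficulty, and combining it with the forward direction establishes the equivalence. I would keep the write-up short, since every step other than the longest-path argument is routine degree bookkeeping; the reader's attention should be directed to the cycle-extraction claim, which is where the even-degree hypothesis does its essential work.
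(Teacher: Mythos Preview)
Your argument is the standard textbook proof of Veblen's theorem and is correct. Note, however, that the paper does not actually prove Theorem~\ref{T:Veblen}: it is quoted as a classical result with a citation to Veblen's original work and then invoked as a black box in the proof of Lemma~\ref{L:HS}. So there is no ``paper's own proof'' to compare against; your write-up would serve perfectly well as a self-contained justification if one were desired.

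One small remark, since the paper applies the theorem to multigraphs (Veblen graphs may have repeated edges, and $2$-cycles count as cycles): in your longest-path step you assert that $v_\ell$ has a neighbor other than $v_{\ell-1}$, but in a multigraph $v_\ell$ could have degree~$2$ via two parallel edges to $v_{\ell-1}$. This is harmless---those two parallel edges already form a $2$-cycle---but if you want the argument to cover multigraphs explicitly, it is worth inserting a one-line case split there. For simple graphs your proof needs no amendment.
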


\begin{proof}
Let $G$ be a connected Veblen graph which is not a cycle.  Further assume that the multi-edges of $G$ are distinguishable.  We aim to show
\[
\sum_{P \vdash G}(-1)^{c(P)}C_P = 0.
\]
By Lemma \ref{L:graphcoef} we have for connected $G$, 
\[
C_G = \frac{|\mathfrak{E}(G)|}{\prod_{e\in E(G)}m(e)!}.
\]
Let $P = \bigcup_{i=1}^t P_i$ be a disjoint union of Veblen graphs.  We denote
\[
\binom{m_G(e)}{P(e)} = \frac{m_G(e)!}{\prod_{i=1}^tm_{P_i}(e)!}
\]
and
\[
|\mathfrak{E}(P)| = \prod_{i=1}^t|\mathfrak{E}(P_i)|.
\]
We equate
\[
\left(\prod_{e \in E(G)}m(e)!\right)\sum_{P \vdash G}(-1)^{c(P)}C_P  = \sum_{P \vdash G} (-1)^{c(P)}|\mathfrak{E}(P)| \prod_{e \in E(G)} \binom{m_G(e)}{P(e)}.
\]
Notice that $|\mathfrak{E}(P)| \prod_{e \in E(G)} \binom{m_G(e)}{P(e)}$ counts the number of partitions of $E(G)$ into edge-disjoint Euler circuits of graphs on $V(G)$ with unlabelled edges which are precisely the elements of $P$.

We say that an Euler circuit is \emph{decomposable} if it can be written as a union of (at least) two edge disjoint Euler circuits, and \emph{indecomposable} otherwise.  The number of decompositions of Euler circuits of $G$ into exactly $t$ parts is
\[
\sum_{P \vdash G, }\left(|\mathfrak{E}(P)| \prod_{e \in E(G)} \binom{m_G(e)}{P(e)}\right).
\]
By Inclusion/Exclusion, the number of indecomposable Euler circuits of $G$ is
\[
\sum_{t=1}(-1)^t \sum_{P \vdash G, }\left(|\mathfrak{E}(P)| \prod_{e \in E(G)} \binom{m_G(e)}{P(e)}\right).
\]
We assumed that $G$ is a Veblen graph which is not a cycle.  We have by Veblen's theorem (i.e., Theorem \ref{T:Veblen}) that every Euler circuit in $G$ is decomposable.  It follows that $G$ has no indecomposable Euler circuits; that is to say,
\[
\sum_{t=1}(-1)^t \sum_{P \vdash G, }\left(|\mathfrak{E}(P)| \prod_{e \in E(G)} \binom{m_G(e)}{P(e)}\right) = 0,
\]
from which the desired conclusion follows. 
\end{proof}

\section{The Associated Coefficient of a $k$-uniform Simplex and the Codegree-$k$ Coefficient}
 
 Fix $k \geq 2$ and let 
 \[
 K_{k+1}^{(k)} = \left([k+1], \binom{[k+1]}{k}\right)
 \] 
 be the $k$-uniform simplex.  It was shown in \cite{Coo} that the $k$-uniform simplex is the only connected Veblen hypergraph with $k+1$ edges, up to isomorphism. Moreover, it was shown for a $k$-graph ${\cal H},$ 
 \[
 c_{k+1}({\cal H}) = -(k-1)^{n-k}C_k(\# H \subseteq {\cal H})
 \]
for some constant $C_k$ depending only on $k$.  The authors of \cite{Coo} were able to show that $C_2 = 2$, $C_3 = 21$, $C_4 = 588$, $C_5 = 28230$ via laborious use of resultants.  In this section we provide an explicit, efficient formula for $C_k$ and use it to compute $C_{100}$ (see A320653 \cite{OEIS}).

\begin{remark} 
For the remainder of this section let $H$ denote the $k$-uniform simplex $K_k^{(k+1)}$.  We will denote an arbitrary $k$-graph using ${\cal H}$.
\end{remark}
Let ${\mathfrak D}_{k+1}$ be the set of derangements of $[k+1]$, i.e., permutations without any fixed points.  We begin by showing that the Euler rootings of the $k$-uniform simplex are in bijection with the derangements of $[k+1]$. 
\begin{lemma}
For $\sigma \in \mathfrak{D}_{k+1}$ define
\[
D_{\sigma} = \bigcup_{i=1}^{k+1}S_{[k+1] \setminus \{\sigma(i)\}}(i).
\]
We have that
\[
R(H) = \{D_\sigma : \sigma \in \mathfrak{D}_{k+1}\}.
\]
\end{lemma}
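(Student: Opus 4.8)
The plan is to exhibit the correspondence $\sigma \mapsto D_\sigma$ explicitly and check it is a bijection onto $R(H)$ by a direct in-degree/out-degree computation. First I would fix the convenient indexing of the edges of $H = K_{k+1}^{(k)}$ by the vertex they omit: write $e_j = [k+1]\setminus\{j\}$, so that $v \in e_j$ if and only if $v \neq j$, and every rooting of $H$ amounts to a choice of root $\rho(e_j) \in e_j$ for each $j$ (the ordering of the stars being irrelevant for the moment), the star $S_{e_j}(\rho(e_j))$ contributing the arcs $\rho(e_j) \to v$ for all $v \in e_j \setminus \{\rho(e_j)\}$.

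Next I would compute the degrees in the multi-digraph $D_R$ of such a rooting. Letting $r_v = |\{\, j : \rho(e_j) = v \,\}|$ be the number of edges rooted at $v$, one has $\sum_v r_v = |E(H)| = k+1$. Each star rooted at $v$ contributes $k-1$ arcs out of $v$, so the out-degree of $v$ in $D_R$ is $(k-1)\,r_v$. On the other hand $v$ lies in exactly $k$ edges (the $e_j$ with $j \neq v$), and each such edge that is \emph{not} rooted at $v$ sends exactly one arc into $v$, so the in-degree of $v$ is $k - r_v$ (the edge $e_v$ contributes nothing, since $v \notin e_v$). Hence $D_R$ can be Eulerian only if $(k-1)\,r_v = k - r_v$, i.e.\ $r_v = 1$, for every $v$; conversely, when $r_v = 1$ for all $v$ every vertex has equal in- and out-degree $k-1$, and $D_R$ is strongly connected because from each vertex there are arcs to all but one of the remaining $k$ vertices (so any two vertices are joined by a walk of length at most $2$), whence $D_R$ is Eulerian. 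This step is really just careful bookkeeping — in particular remembering that $e_v$ is the unique edge avoiding $v$ — and is the only place that needs attention; I do not anticipate a genuine obstacle.

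Finally I would translate the condition ``$r_v = 1$ for every $v$'' into a derangement. In that case the rule sending $v$ to the unique $j$ with $\rho(e_j) = v$ is a permutation $\sigma$ of $[k+1]$; the requirement that $v$ be a legal root of $e_{\sigma(v)}$, namely $v \in e_{\sigma(v)}$, is exactly $v \neq \sigma(v)$, so $\sigma \in \mathfrak{D}_{k+1}$, and unwinding the definitions gives $D_R = \bigcup_{v} S_{[k+1]\setminus\{\sigma(v)\}}(v) = D_\sigma$. Conversely, for any $\sigma \in \mathfrak{D}_{k+1}$ the assignment $\rho(e_j) = \sigma^{-1}(j)$ is a legal rooting with all $r_v = 1$, so $D_\sigma \in R(H)$ by the previous paragraph, and $\sigma \mapsto D_\sigma$ is injective since $D_\sigma$ recovers $\rho$ and hence $\sigma$. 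To reconcile this with the fact that $R(H)$ is a priori a multiset of rooted digraphs, I would note that when all $r_v = 1$ the roots $v_1 \le \cdots \le v_{k+1}$ are a permutation of $[k+1]$, forcing $v_i = i$, so each $D_\sigma$ arises from exactly one rooting; thus $R(H)$ is in fact the set $\{\, D_\sigma : \sigma \in \mathfrak{D}_{k+1} \,\}$ and the claimed equality follows.
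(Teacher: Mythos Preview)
Your proposal is correct and follows essentially the same approach as the paper: both arguments compute the in- and out-degrees in $D_R$ to force every vertex to be a root exactly once, and then read off the derangement $\sigma$ from the resulting assignment of roots. Your version is somewhat more thorough than the paper's, since you make the balance equation $(k-1)r_v = k - r_v$ explicit, verify strong connectedness of $D_\sigma$, and address the multiset/ordering issue in the definition of $R(H)$; the paper's proof leaves these points implicit.
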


\begin{proof}
Let $\sigma \in {\mathfrak D}_{k+1}$ and consider $D_\sigma$.  We suppress the subscript and write $D$ for convenience.  Note that $D$ is Eulerian whence  
\[
\deg^+_D(j) = (k-1)|\{i : v_i = j\}| = k-1 = |\{i : v_i \neq j, j \in e_i\}| = \deg^-_D(j)
\]
for all $j \in [k+1]$ as $\sigma$ is a derangement and there are exactly $k$ edges containing $j$. 

Suppose $D_R \in R(H)$.  As $D$ is Eulerian and each vertex is contained in exactly $k$ edges it must be the case that each vertex is chosen as a root in $R$ precisely once.  It follows that
\[
D = \bigcup_{i=1}^{k+1} S_{[k+1]\setminus \{v_i\}}(i)
\]
where $\{v_1, \dots, v_{k+1}\} = [k+1]$.  Observe that $\sigma = \{(i,v_i)\}_{i=1}^{k+1} \in \mathfrak{D}_{k+1}$ is a derangement whence $v_i \neq i$.  The conclusion follows from the fact that $D = D_\sigma$. 
\end{proof}

This immediately implies the following.  

\begin{lemma}
For the $k$-uniform simplex $H$,
\[
C_H = \sum_{\sigma \in {\mathfrak D}_{k+1}}\frac{\tau_\sigma}{\prod_{v \in V(D_\sigma)} \deg^-(v)}.
\]
\end{lemma}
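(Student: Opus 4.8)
The plan is to read the claim directly off the definition of the associated coefficient of a connected Veblen hypergraph together with the lemma just proved. By definition,
\[
C_H = \sum_{D \in R(H)} \frac{\tau_D}{\prod_{v \in V(D)} \deg^-(v)},
\]
where the sum runs over the multiset $R(H)$ of Euler rooted digraphs of $H$, and the preceding lemma identifies $R(H)$ with $\{D_\sigma : \sigma \in \mathfrak{D}_{k+1}\}$. Writing $\tau_\sigma$ for $\tau_{D_\sigma}$, the only point that needs attention is that re-indexing the sum by $\mathfrak{D}_{k+1}$ neither merges nor splits terms: I must check that $\sigma \mapsto D_\sigma$ is injective and that each $D_\sigma$ occurs with multiplicity exactly one in the multiset $R(H)$.

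For injectivity I would argue as follows. In $D_\sigma$ the vertex $i$ is the root of exactly one directed star, namely $S_{[k+1]\setminus\{\sigma(i)\}}(i)$, so the out-neighborhood of $i$ in $D_\sigma$ is precisely $[k+1]\setminus\{i,\sigma(i)\}$; hence $\sigma(i)$ can be recovered from $D_\sigma$ as the unique vertex $j\neq i$ with no arc from $i$ to $j$. Thus distinct derangements yield distinct digraphs. For the multiplicity-one statement, recall from the proof of the preceding lemma that in any Euler rooting of $H$ each vertex of $[k+1]$ is selected as a root exactly once; together with the monotonicity condition $v_i \le v_{i+1}$ built into the definition of a rooting, this forces the root sequence to be $(1,2,\dots,k+1)$, so there is a single rooting $R$ with $D_R = D_\sigma$. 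Substituting $R(H) = \{D_\sigma : \sigma \in \mathfrak{D}_{k+1}\}$ into the display above then gives
\[
C_H = \sum_{\sigma \in \mathfrak{D}_{k+1}} \frac{\tau_\sigma}{\prod_{v \in V(D_\sigma)}\deg^-(v)},
\]
as claimed.

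I do not expect a genuine obstacle here — the substantive content is the preceding lemma, and what remains is the bookkeeping verification above that $R(H)$ is indexed without repetition by $\mathfrak{D}_{k+1}$. If desired, one can phrase this even more cleanly by exhibiting the explicit inverse $D_\sigma \mapsto \sigma$ described above, making the bijection manifest and the substitution immediate.
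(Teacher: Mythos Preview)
Your proposal is correct and matches the paper's approach exactly: the paper treats this lemma as an immediate consequence of the preceding one, simply stating ``This immediately implies the following'' and writing down the formula with no further argument. Your additional verification that $\sigma \mapsto D_\sigma$ is injective and that each $D_\sigma$ appears with multiplicity one in $R(H)$ is more careful than the paper itself, but it is precisely the bookkeeping the paper is tacitly invoking.
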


We now show that a summand in the aformentioned formula of $C_H$ depends only on the cycle type of the derangement.  

\begin{theorem}
\label{T:simplexFormula}
Let $\sigma \in {\mathfrak D}_{k+1}$ with cycle decomposition $c_1c_2\dots c_t$ where cycle $c_i$ has length $\ell_i$.  Then

\[
C_H = \frac{1}{(k-1)^{k+1}(k+1)} \sum_{\sigma = c_1c_2\dots c_t \in {\mathfrak D}_{k+1}} \prod_{i=1}^t\left( k^{\ell_i} + (-1)^{\ell_i + 1}\right).
\]
\end{theorem}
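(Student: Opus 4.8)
The plan is to evaluate the summand of $C_H=\sum_{\sigma\in\mathfrak{D}_{k+1}}\tau_\sigma\big/\prod_{v\in V(D_\sigma)}\deg^-(v)$ attached to a fixed derangement $\sigma$, express it via the cycle type of $\sigma$, and then sum over $\mathfrak{D}_{k+1}$. The denominator is immediate: in $D_\sigma$ every vertex is the root of exactly one directed star, whose out-neighbourhood $e_i\setminus\{i\}$ has size $k-1$, and since $\sigma$ is fixed-point-free a short count gives in-degree $k-1$ at every vertex as well; hence $\prod_{v}\deg^-(v)=(k-1)^{k+1}$ for every $\sigma$, and everything reduces to computing the arborescence count $\tau_\sigma:=\tau(D_\sigma)$.

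For this I would pass to linear algebra. The adjacency matrix of $D_\sigma$ is $A_\sigma=J-I-P_\sigma$, with $J$ all-ones, $I$ the identity, and $P_\sigma$ the permutation matrix of $\sigma$; the three terms have disjoint supports precisely because $\sigma$ is a derangement. Since $D_\sigma$ is Eulerian with all in- and out-degrees equal to $k-1$, the directed Matrix--Tree theorem identifies $\tau_\sigma$ with any diagonal cofactor of the Laplacian $L_\sigma:=(k-1)I-A_\sigma=kI-J+P_\sigma$, and all $k+1$ of these cofactors are equal; therefore $(k+1)\tau_\sigma$ equals the sum of the diagonal cofactors of $L_\sigma$, which is the $k$-th elementary symmetric function $e_k(L_\sigma)$ of the eigenvalues of $L_\sigma$. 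I would then read off the spectrum of $L_\sigma$: the all-ones vector $\mathbf{1}$ is an eigenvector with eigenvalue $0$ (the row sums vanish), while on the invariant complement $\mathbf{1}^{\perp}$ the rank-one term $J$ acts as $0$, so $L_\sigma$ restricts there to $kI+P_\sigma$. The eigenvalues of $P_\sigma$ are, cycle by cycle, the $\ell_i$-th roots of unity, and passing to $\mathbf{1}^{\perp}$ removes a single eigenvalue $1$; since $k\ge 2$ no number $k+\zeta$ with $|\zeta|=1$ vanishes, so $0$ is a simple eigenvalue of $L_\sigma$ and $e_k(L_\sigma)$ is exactly the product of the nonzero eigenvalues, i.e.\ $\big(\prod_{i=1}^{t}\prod_{\zeta^{\ell_i}=1}(k+\zeta)\big)\big/(k+1)$.

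It remains to evaluate $\prod_{\zeta^{\ell}=1}(k+\zeta)$. From $\prod_{j=0}^{\ell-1}(x-\zeta_\ell^{j})=x^\ell-1$ one obtains $\prod_{j=0}^{\ell-1}(x+\zeta_\ell^{j})=x^\ell+(-1)^{\ell+1}$, hence $\prod_{\zeta^{\ell_i}=1}(k+\zeta)=k^{\ell_i}+(-1)^{\ell_i+1}$ and the product over the full spectrum of $P_\sigma$ is $\prod_{i=1}^t\big(k^{\ell_i}+(-1)^{\ell_i+1}\big)$; equivalently, expanding $\det(kI+P_\sigma)$ over permutations, the only nonzero terms come from permutations that on each $\sigma$-cycle are either the identity (contributing $k^{\ell_i}$) or that entire cycle (contributing $(-1)^{\ell_i+1}$), which gives the same product. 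Combining the pieces yields $(k+1)^{2}\tau_\sigma=\prod_{i=1}^t\big(k^{\ell_i}+(-1)^{\ell_i+1}\big)$; dividing by $\prod_v\deg^-(v)=(k-1)^{k+1}$ and summing over $\sigma\in\mathfrak{D}_{k+1}$ then gives the closed form for $C_H$. The main obstacle is the second paragraph: one must use the Eulerianity of each $D_\sigma$ both to know that $\tau_\sigma$ is root-independent — legitimising the jump from a single cofactor to the sum of all $k+1$ of them — and to turn that sum into $e_k(L_\sigma)$; here it is worth being precise about whether $\tau_D$ is counted with a specified root or over all roots, as that choice alone fixes the exact power of $k+1$ in the normalization. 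Everything after this bridge is routine permutation/circulant algebra.
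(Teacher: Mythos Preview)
Your approach is essentially the paper's: both compute the Laplacian $L_\sigma = kI + P_\sigma - J$, determine its spectrum, and invoke the (directed) Matrix--Tree/Kirchhoff theorem. The paper packages the spectral step as a stand-alone lemma showing $\spec(P_\sigma - J) = (\spec(P_\sigma)\setminus\{1\})\cup\{-k\}$ by exhibiting explicit eigenvectors, whereas you obtain the same result more quickly via the invariant decomposition $\mathbb{C}^{k+1}=\mathbb{C}\mathbf{1}\oplus\mathbf{1}^{\perp}$. Your additional remark that $\det(kI+P_\sigma)$ can also be evaluated directly by expanding over permutations is a nice alternative to the roots-of-unity identity; the paper does not mention it.

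On the normalization you flag at the end: you are right to be suspicious, and your computation $(k+1)^{2}\tau_\sigma=\prod_i\bigl(k^{\ell_i}+(-1)^{\ell_i+1}\bigr)$ is the correct one when $\tau_\sigma$ counts arborescences at a \emph{fixed} root (the convention forced by the BEST theorem as the paper states it). One factor of $k+1$ comes from summing the equal diagonal cofactors, and a second comes from deleting the single eigenvalue $k+1$ when passing from $kI+P_\sigma$ to $L_\sigma$ on $\mathbf{1}^{\perp}$. The paper's displayed formulas in the theorem (and in its proof, where it writes $\tau_\sigma=\prod_i(\cdots)/(k+1)$) carry only one factor of $k+1$; a quick check at $k=2$ shows this is a typo: with two derangements of $[3]$, each a $3$-cycle, the stated formula gives $C_H=\tfrac{1}{1\cdot 3}(9+9)=6$, while $C_H=C_2/(k-1)^k=2$. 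Your version, with $(k+1)^{2}$ in the denominator, reproduces $C_2=2$ and $C_3=21$ correctly. So your derivation is sound; just be explicit that it yields $C_H=\dfrac{1}{(k-1)^{k+1}(k+1)^{2}}\sum_{\sigma}\prod_i\bigl(k^{\ell_i}+(-1)^{\ell_i+1}\bigr)$ rather than the expression printed in the statement.
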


We first prove a technical lemma.  The notation $\spec(M)$ denotes the ordinary (multiset) spectrum of a matrix $M$. 
\begin{lemma}
\label{L:permutationSpectrum}
For $\sigma \in S_{n+1}$, $spec(M_\sigma - J) = (\spec(M_\sigma) \setminus \{1\}) \cup \{-n\}$ where $M_\sigma$ is the permutation matrix associated with $\sigma$.
\end{lemma}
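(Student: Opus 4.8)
The plan is to exploit the fact that the all-ones vector $\mathbf{1}\in\C^{n+1}$ is a common eigenvector of $M_\sigma$ and of $J=\mathbf{1}\mathbf{1}^{T}$, and to split $\C^{n+1}$ accordingly into two $M_\sigma$-invariant pieces on which $J$ acts by scalars.

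First I would record the elementary observations: since every row and every column of a permutation matrix contains a single $1$, we have $M_\sigma\mathbf{1}=\mathbf{1}$ and $M_\sigma^{T}\mathbf{1}=\mathbf{1}$, so $1\in\spec(M_\sigma)$ with eigenvector $\mathbf{1}$. Because $M_\sigma$ is a real orthogonal matrix it is normal, hence its orthogonal-complement decomposition is a decomposition into invariant subspaces; in particular $W:=\mathbf{1}^{\perp}$ is $M_\sigma$-invariant and $\C^{n+1}=\C\mathbf{1}\oplus W$ as $M_\sigma$-modules. Consequently, as multisets, $\spec(M_\sigma)=\{1\}\cup\spec(M_\sigma|_W)$, which is precisely what is meant by identifying $\spec(M_\sigma|_W)$ with $\spec(M_\sigma)\setminus\{1\}$ (one copy of the eigenvalue $1$ removed).

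Next I would observe that $J=\mathbf{1}\mathbf{1}^{T}$ acts as the scalar $n+1$ on $\C\mathbf{1}$ and as $0$ on $W$. Therefore $M_\sigma-J$ respects the same direct-sum decomposition: on $\C\mathbf{1}$ it is multiplication by $1-(n+1)=-n$, and on $W$ it agrees with $M_\sigma|_W$. Reading off the spectrum block by block gives $\spec(M_\sigma-J)=\{-n\}\cup\spec(M_\sigma|_W)=\{-n\}\cup(\spec(M_\sigma)\setminus\{1\})$, which is the claim. (If one prefers a purely computational route, the matrix determinant lemma gives $\det(xI-M_\sigma+\mathbf{1}\mathbf{1}^{T})=\det(xI-M_\sigma)\bigl(1+\mathbf{1}^{T}(xI-M_\sigma)^{-1}\mathbf{1}\bigr)$ for $x\notin\spec(M_\sigma)$; using $(xI-M_\sigma)^{-1}\mathbf{1}=(x-1)^{-1}\mathbf{1}$ this simplifies to $\det(xI-M_\sigma)\cdot\frac{x+n}{x-1}$, and since $(x-1)\mid\det(xI-M_\sigma)$ the right-hand side is the polynomial $(x+n)\cdot\frac{\det(xI-M_\sigma)}{x-1}$; equality of the two polynomials for all $x$ follows by continuity, and comparing roots yields the result.)

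I do not expect a genuine obstacle here; the only point requiring care is the bookkeeping of multiplicities, i.e.\ checking that the notation $\setminus\{1\}$ removes exactly one copy of the eigenvalue $1$. This is exactly what the one-dimensional summand $\C\mathbf{1}$ (respectively the single cancelled factor $x-1$ in the determinant identity) accounts for, so if $1$ occurs with higher multiplicity in $\spec(M_\sigma)$ it persists in $\spec(M_\sigma-J)$ with multiplicity reduced by one.
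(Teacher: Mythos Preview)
Your proof is correct and is cleaner than the paper's. The paper proceeds by explicit construction: it recalls that $\spec(M_\sigma)$ is the multiset union of $\ell_i$-th roots of unity over the cycles of $\sigma$, verifies directly that each eigenvector of $M_\sigma$ attached to a nontrivial root of unity $\zeta_{\ell_i}^j$ ($j>0$) remains an eigenvector of $M_\sigma-J$ with the same eigenvalue (because its entries sum to zero), then builds by hand $t-1$ linearly independent eigenvectors for the eigenvalue $1$ (one for each consecutive pair of cycles), and finally checks that $\mathbf{1}$ gives the eigenvalue $-n$; a dimension count finishes.

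Your approach replaces all of this casework with one structural observation: $M_\sigma$ is normal, so $\mathbb{C}^{n+1}=\mathbb{C}\mathbf{1}\oplus\mathbf{1}^\perp$ is an $M_\sigma$-invariant orthogonal decomposition on which $J$ acts by the scalars $n+1$ and $0$ respectively. This is both shorter and more robust---it works verbatim for any normal matrix having $\mathbf{1}$ as an eigenvector---and your parenthetical matrix-determinant-lemma argument is an equally valid alternative. The paper's route has the minor advantage of exhibiting explicit eigenvectors, but those are not used elsewhere; what \emph{is} used downstream (in the proof of Theorem~\ref{T:simplexFormula}) is the explicit description of $\spec(M_\sigma)$ as a union of roots of unity, and your statement $\spec(M_\sigma-J)=(\spec(M_\sigma)\setminus\{1\})\cup\{-n\}$ delivers that just as well once $\spec(M_\sigma)$ is known.
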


\begin{proof}
Let $\sigma$ be a permutation of $[n+1]$ with cycles $c_1, c_2, \dots, c_t$ of length $l_1, l_2, \dots, l_t$, respectively.  Recall that the spectrum of $M_\sigma$ is given by
\[
\spec(M_\sigma) = \bigcup_{i =1}^t \{\zeta_{l_i}^0, \zeta_{l_i}^1, \dots, \zeta_{l_i}^{l_i-1}\}
\]
and note that the spectrum of $M_{\sigma}$ depends only on the cycle type of $\sigma$.  Without loss of generality, suppose that the cycles of $\sigma$ are increasing (i.e., $[1,\ell_1], [\ell_1+1, \ell_2+\ell_1], \dots$).  Consider the following block partition
\[
 M_\sigma - J_{n+1} = \left( \begin{array}{cccc}
B_1 & -J_{\ell_2} & \dots &-J_{\ell_t} \\
-J_{\ell_1} & B_2 & \dots & -J_{\ell_n} \\
\vdots & \vdots & \ddots & \vdots\\
-J_{\ell_1} & -J_{\ell_2} & \dots & B_t \end{array} \right)
\]
 where $B_i$ is the $l_i \times l_i$ square circulant matrix corresponding to $c_i$,
 \[
 B_i = \left( \begin{array}{ccccc}
-1 & 0 & -1 & \dots &-1 \\
-1 & -1& 0 & \dots  & -1 \\
\vdots & \vdots & \vdots &\ddots & \vdots\\
-1 & -1 & -1 &\dots & 0\\
0 & -1 & -1 &\dots & -1 \end{array} \right).
\]

Let $j > 0$ and consider the eigenpair $(\zeta_{\ell_i}^j, {\bf x})$ of $M_{\sigma}$.  We compute
\[(M_{\sigma} - J){\bf x} = M_\sigma{\bf x} - J{\bf x} = \zeta_{\ell_i}^j {\bf x} - {\bf 0} = \zeta_{\ell_i}^j {\bf x}
\]
where $J{\bf x} = 0$ because the coordinates of $x$ corresponding to $c_i$ are the complete set of $\ell_i$-th roots of unity.

We now show $1 \in \spec(M_\sigma - J)$ has a geometric multiplicity of $t-1$. Fix  $1 \leq i \leq t-1$ and consider ${\bf x} \in \mathbb{C}^{n+1}$ where
\begin{displaymath}
   {\bf x}_j = \left\{
     \begin{array}{ll}
       1 & : j \in c_i \\
       -\frac{\ell_i}{\ell_{i+1}} & : j \in c_{i+1}\\
       0 & :  \text{otherwise.}
     \end{array}
   \right.
\end{displaymath} 
For $j \in c_i$ we have,
\[
((M_\sigma - J){\bf x})_j = -(\ell_i-1) + \frac{\ell_i}{\ell_{i+1}}(\ell_{i+1}) = 1 = {\bf x}_j,
\]
for $j \in c_{i+1}$ we have,
\[
((M_\sigma - J){\bf x})_j = -\ell_i + \frac{\ell_i}{\ell_{i+1}}(\ell_{i+1}-1) = - \frac{\ell_i}{\ell_{i+1}} = {\bf x}_j,
\]
and for $j \notin c_i, c_{i+1}$ we have
\[
((M_\sigma - J){\bf x})_j = -\ell_i + \frac{\ell_i}{\ell_{i+1}}(\ell_{i+1}) = 0 = {\bf x}_j.
\]
Therefore, $(1, {\bf x})$ is an eigenpair for $1 \leq  i \leq t$; moreover, these vectors are linearly independent.  

Finally, consider the all-ones vector ${\bf 1} \in \mathbb{C}^{n+1}$ where 
\[
(M_\sigma - J) {\bf 1} = -n{\bf 1}
\]
so $(-n, {\bf 1})$ is an eigenpair of $M_\sigma - J$.  
We have shown 
\[
\spec(M_\sigma - J) \supseteq (\spec(M_\sigma) \setminus \{1\}) \cup \{-n\}
\]
and the reverse inclusion follows from the observation that the multiplicities on the right-hand side add up to (at least, and therefore exactly) $n+1$. 
\end{proof}

We now prove Theorem \ref{T:simplexFormula}.

\begin{proof}
Consider
\[
C_H = \sum_{\sigma \in {\mathfrak D}_{k+1}}\frac{\tau_\sigma}{\prod_{v \in V(D_\sigma)} \deg^-(v)}.
\]  
As previously mentioned, each vertex of $H$ is a root of exactly one edge in an Euler rooting of $H$.  It follows that, for all $\sigma \in {\mathfrak D}_{k+1}$, \[\prod_{v \in V(D_\sigma)} \deg^-(v) = (k-1)^{k+1}.\]   Thus,
\[
C_H = \frac{1}{(k-1)^{k+1}}\sum_{\sigma \in {\mathfrak D}_{k+1}} \tau_\sigma.
\]  
Consider $\sigma = c_1c_2 \dots c_t$ where cycle $c_i$ has length $\ell_i$.  Observe that
\[
{\cal L}(D_\sigma) = kI + M_\sigma - J.
\]
Since $\spec(kI + M_\sigma - J) = k + \spec(M_\sigma - J)$ we have by Lemma \ref{L:permutationSpectrum} and Kirchoff's theorem
\[
\tau_{\sigma} = \frac{\prod_{i = 1}^t \left(k^{l_i} + (-1)^{l_i+1} \right)}{k+1}.
\]
The desired equality follows by substitution. 
\end{proof}

\begin{remark}
It was shown in \cite{Coo} that for the $k$-uniform simplex,
\[
c_{k+1} = -C_k(k-1)^{n-k},
\] and by Theorem \ref{T:codegreeFormula} we have that
\[
c_{k+1} = -C_H(k-1)^n.
\]
In our notation, we can write \(C_k = (k-1)^kC_H\).
\end{remark}
 For ease of computation we consider $C_k$ instead of $C_H$.
As it is stated, Theorem \ref{T:simplexFormula} is slow to compute as $|{\mathfrak D}_{n+1}| \sim n!/e$.  However, summing over all derangements of $[k+1]$ is wasteful, as we have shown $C_k$ is a function only of the cycle structure of $\sigma$.  In fact, 
\[
\prod_{i=1}^t \left(k^{l_i} +(-1)^{l_i+1}\right)
\] 
is constant for derangements with the same cycle type.  We present a reformulation of Theorem \ref{T:simplexFormula} which has the advantage of considering a smaller search space.

\begin{definition}
Let $P(n)$ be the set of partitions of $n$ and let $P_{\geq 2}(n) \subseteq P(n)$ be the set of partitions of $n$ into parts of size at least $2$.  For $p \in P_{\geq 2}(k+1)$, let $\mathfrak{D}_{k+1}(p) \subseteq \mathfrak{D}_{k+1}$ be the set of derangements whose cycle lengths agree with the parts of $p$.  Further, for a partition $p \in P(n)$ let $V_p :[n] \to [0,n]$ be the map $V_p(i) = |\{j : p_j = i\}|$.
\end{definition}
 We reformulate Theorem \ref{T:simplexFormula} (for $C_k$) as follows.
\begin{corollary}
\label{C:improvedCk}
\[
C_k = \frac{1}{(k-1)(k+1)}\sum_{p = (p_1, \dots, p_t) \in P_{\geq 2}(k+1)}\left( |\mathfrak{D}_{k+1}(p)|\prod_{i=1}^t\left( k^{p_i} +(-1)^{p_i+1}\right)
\right)
\]
where
\[
|{\mathfrak D}_{k+1}(p)| = \frac{(k+1)!}{\left(\prod_{i=1}^t p_i\right)\left(\prod_{i=2}^{k+1}V_p(i)\right)}.
\]
\end{corollary}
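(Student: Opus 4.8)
The plan is to read Corollary \ref{C:improvedCk} off of Theorem \ref{T:simplexFormula} by collapsing the sum over all derangements into a sum over cycle types. First I would combine Theorem \ref{T:simplexFormula} with the identity $C_k = (k-1)^k C_H$ from the preceding remark: since $(k-1)^k/(k-1)^{k+1} = 1/(k-1)$, substitution immediately gives
\[
C_k = \frac{1}{(k-1)(k+1)} \sum_{\sigma = c_1 \cdots c_t \in \mathfrak{D}_{k+1}} \prod_{i=1}^t \left( k^{\ell_i} + (-1)^{\ell_i + 1} \right),
\]
where $\ell_i$ is the length of the cycle $c_i$ in the cycle decomposition of $\sigma$.

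The one substantive observation is that the summand $\prod_{i=1}^t \left( k^{\ell_i} + (-1)^{\ell_i+1} \right)$ is invariant under relabeling of the ground set, hence depends only on the multiset $\{\ell_1,\dots,\ell_t\}$, i.e., on the cycle type of $\sigma$. Because a derangement has no fixed points, its cycle type is precisely a partition of $k+1$ into parts of size at least $2$, and every such partition occurs; so partitioning $\mathfrak{D}_{k+1}$ into the fibers $\mathfrak{D}_{k+1}(p)$ over $p \in P_{\geq 2}(k+1)$ and pulling the now-constant summand out of each fiber yields
\[
\sum_{\sigma \in \mathfrak{D}_{k+1}} \prod_{i=1}^t \left( k^{\ell_i} + (-1)^{\ell_i+1} \right) = \sum_{p = (p_1,\dots,p_t) \in P_{\geq 2}(k+1)} |\mathfrak{D}_{k+1}(p)| \prod_{i=1}^t \left( k^{p_i} + (-1)^{p_i+1} \right),
\]
which is the asserted formula for $C_k$.

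It then remains only to verify the count $|\mathfrak{D}_{k+1}(p)|$, and this is the classical enumeration of permutations with a prescribed cycle type: one arranges the $k+1$ symbols in a row (in $(k+1)!$ ways) and cuts the row into consecutive blocks of sizes $p_1,\dots,p_t$, reading each block as a cyclic word. Each permutation of cycle type $p$ arises this way $\prod_{i=1}^t p_i$ times from cyclic rotations within the blocks, together with a further overcount from reordering blocks of equal length (the factor involving $V_p$), and dividing out these symmetries gives the displayed expression for $|\mathfrak{D}_{k+1}(p)|$. Essentially all the mathematical content sits in Theorem \ref{T:simplexFormula}; the only thing requiring care is the cycle-type bookkeeping in this last step, which is entirely routine, so I do not anticipate a real obstacle.
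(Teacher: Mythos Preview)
Your proposal is correct and follows essentially the same approach as the paper: reduce Theorem \ref{T:simplexFormula} via $C_k=(k-1)^kC_H$, group derangements by cycle type, and then count $|\mathfrak{D}_{k+1}(p)|$ by the linear-arrangement-and-cut argument (which is exactly the paper's surjection $\Delta:\mathfrak{S}_{k+1}\to\mathfrak{D}_{k+1}(p)$). The paper's proof only spells out the counting step, taking the reduction from Theorem \ref{T:simplexFormula} as evident, so your write-up is in fact slightly more explicit.
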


\begin{proof}
It is sufficient to show
\[
|{\mathfrak D}_{k+1}(p)| = \frac{(k+1)!}{\left(\prod_{i=1}^t p_i\right)\left(\prod_{i=2}^{k+1}V_p(i)!\right)}
\] 
for $p = (p_1, \dots, p_t) \in P_{\geq 2}(k+1)$.
 Let $\Delta : \mathfrak{S}_{k+1} \to {\mathfrak D}_{k+1}(p)$ by
\[
\Delta(\sigma) = (\sigma(1), \sigma(2), \dots, \sigma(p_1))(\sigma(p_1+1),\dots,\sigma(p_1 +p_2))\dots.
\]
Note that $\Delta$ is surjective.  Since a cycle of $\Delta(\sigma)$ can be written with any one of its $p_i$ elements first and there are $\prod_{i=2}^{k+1}V_p(i)!$ linear orderings of the cycles by non-increasing length we have for $\delta \in \mathfrak{D}_{k+1}$
\[
|\Delta^{-1}(\delta)| =\left(\prod_{i=1}^t p_i\right)\left(\prod_{i=2}^{k+1}V_p(i)!\right).
\]
In particular, $|\Delta^{-1}(\delta)|$ is constant for $\delta \in \mathfrak{D}_{k+1}(p)$ so that
\[
|\mathfrak{S}_{k+1}| = \sum_{\delta \in {\mathfrak D}_{k+1}(p)} |\Delta^{-1}(\delta)|
\]
which implies
\[
|\mathfrak{D}_{k+1}(p)| = \frac{|\mathfrak{S}_{k+1}|}{|\Delta^{-1}(\delta)|}.
\]
\end{proof}

\begin{remark}
Corollary \ref{C:improvedCk} reduces the number of summands in the computation of $C_k$ exponentially because
\[
\log |P_{{\geq}(2)}(n)| \leq \log|P(n)|\approx \pi\sqrt{2n/3},
\]
but $\log|{\mathfrak D}_{n+1}| \approx n \log n$.
\end{remark}

We have computed the first few values of $C_k$ to be
\begin{align*}
C_6 &= 2092206 \\
C_7 &= 220611384 \\
C_8 &= 31373370936 \\
C_9 &= 5785037767440 \\
C_{10} &= 1342136211324090 \\
\vdots &\\
C_{100} &= 3433452419824795908447767175
863463034 52689609890358711139013913758 \\ 
& 7799578881707167888656395980536429532089 29209278848309297069686374206 \\ 
& 6180314961018984853143002532488553340756095279 15686375386625810970778 \\
& 8141825460673693192753149464456033881155778923 54872286012782651661555 \\
& 3106527369037122060186686535415242639036685247 99914172228056595466145 \\
& 2080249009900
\end{align*}
so that $C_{100} \approx 3.433... \cdot 10^{343}$, see A320653 \cite{OEIS}.  Note that $C_2 = 2$ gives the well-known result that, for a graph $G$ 
\[
c_3(G)=-2(\# \text{ of triangles in $G$}).
\]
We conclude by presenting the asymptotics of $C_k$.
\begin{theorem}
\label{T:simplexAsymptotics}
$C_k \sim (k+1)!k^{k+1}$ so $C_k = \exp(k\log k (2 + o(1))$.
\end{theorem}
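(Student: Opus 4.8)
The plan is to extract the dominant contribution to the sum in Corollary \ref{C:improvedCk} and show all other terms are lower order. First I would rewrite
\[
C_k = \frac{1}{(k-1)(k+1)}\sum_{p \in P_{\geq 2}(k+1)} |\mathfrak{D}_{k+1}(p)| \prod_{i=1}^t\left(k^{p_i} + (-1)^{p_i+1}\right),
\]
and observe that for a partition $p = (p_1, \dots, p_t)$ of $k+1$ the product $\prod_i (k^{p_i} + (-1)^{p_i+1})$ is, to leading order, $k^{p_1 + \cdots + p_t} = k^{k+1}$ times a correction factor $\prod_i (1 + (-1)^{p_i+1}k^{-p_i})$ that lies in a bounded range (between, say, $1/2$ and $2$ for $k$ large, since $\sum_i k^{-p_i} \le (k+1) k^{-2} = o(1)$ when all parts are $\ge 2$). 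Hence up to a factor $1+o(1)$,
\[
C_k = \frac{k^{k+1}}{(k-1)(k+1)}\bigl(1+o(1)\bigr)\sum_{p \in P_{\geq 2}(k+1)} |\mathfrak{D}_{k+1}(p)| \cdot (1+o(1))^{??},
\]
so the real content is to evaluate $\sum_{p \in P_{\geq 2}(k+1)} |\mathfrak{D}_{k+1}(p)| = |\mathfrak{D}_{k+1}|$, the total number of derangements of $[k+1]$, together with a more careful bookkeeping of the $k^{-p_i}$ corrections weighted by $|\mathfrak{D}_{k+1}(p)|$.

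The cleanest route avoids the correction-term worry entirely: expand the product exactly as a sum over subsets,
\[
\prod_{i=1}^t\left(k^{p_i} + (-1)^{p_i+1}\right) = \sum_{S \subseteq [t]} k^{\sum_{i \notin S} p_i} \prod_{i \in S}(-1)^{p_i+1},
\]
and note the $S = \emptyset$ term contributes exactly $k^{k+1}$, while every $S \neq \emptyset$ term is at most $k^{k+1 - 2}$ in absolute value since each part is $\ge 2$. Summing against $|\mathfrak{D}_{k+1}(p)|$ over all $p$, the $S=\emptyset$ part gives $k^{k+1}|\mathfrak{D}_{k+1}|$, and the remainder is bounded by $k^{k-1} \cdot 2^{k+1} \cdot |\mathfrak{D}_{k+1}|$ (the $2^{k+1}$ crudely bounding $\sum_p \sum_{S} 1$ — one can do better, but this suffices), which is $o\!\left(k^{k+1}|\mathfrak{D}_{k+1}|\right)$. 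Therefore
\[
C_k = \frac{k^{k+1}|\mathfrak{D}_{k+1}|}{(k-1)(k+1)}\bigl(1+o(1)\bigr).
\]
Finally I would substitute the standard asymptotic $|\mathfrak{D}_{k+1}| \sim (k+1)!/e$ and simplify: $(k+1)!/((k-1)(k+1)) = (k+1)!/(k^2-1) \sim (k-1)!$, wait — more carefully, $(k+1)!/((k-1)(k+1)) = k!/(k-1) \cdot \tfrac{k+1}{k+1}$; I would just write $(k+1)! / ((k-1)(k+1)) = (k+1)!/(k^2-1)$ and combine with the factor $1/e$ and $k^{k+1}$ to obtain $C_k \sim k^{k+1}(k+1)!/(e(k^2-1))$. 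Matching this against the claimed $C_k \sim (k+1)! k^{k+1}$ requires reconciling the $1/(e(k^2-1))$ factor, so the statement as written must intend $C_k \sim (k+1)! k^{k+1}/(e k^2)$ or the constant is being absorbed; in any case the exponential order is $C_k = \exp(k \log k (1 + o(1)))$ from $k^{k+1}$ alone, and $(k+1)! = \exp(k\log k(1+o(1)))$ as well, giving the stated $\exp(k\log k(2+o(1)))$. The main obstacle is precisely this constant-factor reconciliation: I would double-check whether the intended leading constant includes the $1/e$ and the $1/(k^2-1)$, but the crude bound on the $S\neq\emptyset$ terms and the derangement count make the $\exp(k\log k(2+o(1)))$ claim robust regardless, since both $k^{k+1}$ and $(k+1)!/e$ contribute $k\log k(1+o(1))$ to the log.
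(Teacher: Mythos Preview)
Your subset-expansion route has a genuine gap: you bound the remainder by $k^{k-1}\cdot 2^{k+1}\cdot|\mathfrak{D}_{k+1}|$ and assert ``this suffices,'' but $2^{k+1}/k^2\to\infty$, so this is \emph{not} $o\bigl(k^{k+1}|\mathfrak{D}_{k+1}|\bigr)$. The repair is already present in your first paragraph, which you abandoned too quickly: for any derangement with cycle lengths $\ell_1,\dots,\ell_t\ge 2$,
\[
\prod_{i=1}^t\bigl(1+(-1)^{\ell_i+1}k^{-\ell_i}\bigr)
\]
lies between $(1-k^{-2})^{(k+1)/2}$ and $(1+k^{-2})^{(k+1)/2}$, both of which equal $1+O(k^{-1})$ uniformly in $\sigma$. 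Hence $\sum_{\sigma\in\mathfrak{D}_{k+1}}\prod_i(k^{\ell_i}+(-1)^{\ell_i+1})=k^{k+1}|\mathfrak{D}_{k+1}|(1+o(1))$ immediately. (If you prefer to salvage the subset expansion, observe that the terms with $|S|=j$ contribute at most $\binom{t}{j}k^{k+1-2j}\le\binom{(k+1)/2}{j}k^{k+1-2j}$; summing over $j\ge 1$ gives $k^{k+1}\bigl((1+k^{-2})^{(k+1)/2}-1\bigr)=O(k^k)$, so the blunt $2^{k+1}$ must be replaced by this graded estimate.) This uniform sandwich on the product is exactly the paper's approach.

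Your constant-factor worry is legitimate and is not your mistake: the paper's own proof ends by computing $\lim_{k\to\infty}C_k/\bigl((k+1)!k^{k+1}\bigr)=1/e$, not $1$, and it also silently drops the $(k-1)(k+1)$ denominator coming from Corollary~\ref{C:improvedCk}. So the ``$\sim$'' in the theorem statement is being used loosely. Your honest asymptotic $C_k\sim k^{k+1}(k+1)!/\bigl(e(k^2-1)\bigr)$ is correct, and as you rightly note it still yields $\log C_k=(2+o(1))k\log k$, since $k^{k+1}$ and $(k+1)!$ each contribute $(1+o(1))k\log k$ while the remaining factors are $\exp(O(k))$.
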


\begin{proof}
As $\sigma$ is a derangement it follows that the length of each cycle is at least 2.  Notice
\[
|{\mathfrak D}_{k+1}|(k^2-1)^{\frac{k+1}{2}} \leq \sum_{\sigma = c_1c_2\dots c_t \in {\mathfrak D}_{k+1}} \left(\prod_{i=1}^t k^{l_i} + (-1)^{l_i + 1}\right) \leq |{\mathfrak D}_{k+1}|(k^3-1)^{\frac{k+1}{e}} 
\]
which implies
\[
\sum_{\sigma = c_1c_2\dots c_t \in {\mathfrak D}_{k+1}} \left(\prod_{i=1}^t k^{l_i} + (-1)^{l_i + 1}\right) \sim \frac{(k+1)!k^{k+1}}{e}.
\]
Thus,
\[
\lim_{k \to \infty} \frac{C_k}{(k+1)!k^{k+1}} = \lim_{k \to \infty} \frac{|{\mathfrak D}_{k+1}|k^{k+1}}{(k+1)! k^{k+1}} = \frac{1}{e}
\]
and we have  $C_k \sim (k+1)!k^{k+1}$.
\end{proof}

\section{Low Codegree Coefficients of $3$-Graphs}

In this section we provide an explicit formula for the first six leading coefficients of the characteristic polynomial of a $3$-graph.  We further apply these formulas to an example and show how this information can be used to determine the complete spectrum of a hypergraph. 

Let ${\cal H}$ be a simple $3$-graph with $n$ vertices.  We write $(\#\, H \in {\cal H}) = |\{S \subseteq {\cal H} : S \cong \underline{H})|$. From \cite{Coo} we have that $c_1 = 0, c_2 = 0$,
\[
c_3 = -3\cdot2^{n-3}(\# \, e \in {\cal H}),
\]
where $e = K_3^{(3)}$ is the single-edge hypergraph and \[
c_4 = 21\cdot2^{n-3}(\# \, K_4^{(3)} \in {\cal H}).
\]

We use Theorem \ref{T:codegreeFormula} to situation these results and further provide an analogous description of $c_5$ and $c_6$.  Clearly there are no Veblen 3-graphs with one or two edges as each vertex must have degree at least three.  It follows that $c_1,c_2 = 0$.  There is only one Veblen $3$-graph with three edges, the single edge with multiplicity three.  By Theorem \ref{T:codegreeFormula}, then, we have
\[
c_3 = -2^n\cdot \frac{3}{8}(\# \, e \in {\cal H}).
\]
Similarly, \cite{Coo} observed that the only Veblen 3-graph with four edges is the complete graph, which implies 
\[
c_4 = -2^n\cdot\frac{21}{8}(\# \, K_4^{(3)} \in {\cal H}).
\]
For the case of $c_5$ we have
\[
c_5 = -2^n\left(\frac{51}{16}(\# \,\Gamma_{5,1} \in {\cal H}) + \frac{27}{16}(\# \,\Gamma_{5,2} \in{\cal H})\right)
\]
where $\Gamma_{5,1}$ is the tight 5-cycle and $\Gamma_{5,2}$ is the 3-pointed crown (given explicitly in Figure \ref{T:1}).
We further compute
\begin{align*}
c_6 &= 2^{2n} \cdot \frac{9}{64}\left(\frac{(\# \, e \in {\cal H})^2}{2}\right) -2^n\left( \frac{3}{16}  
(\# \, e \in {\cal H}) + \frac{9}{8}(\# \, 
\Gamma_{6,1} \in {\cal H}) + \frac{9}{32} (\# \,\Gamma_{6,2} \in {\cal H}) \right .\\
&+ \frac{99}{32}\cdot 2 (\# \,\Gamma_{6,3} \in {\cal H}) + \frac{213}{16}(\# \,\Gamma_{6,4} \in {\cal H}) + \frac{69}{16}(\# \,\Gamma_{6,5} \in {\cal H}) + \frac{63}{32}(\# \,\Gamma_{6,6} \in {\cal H})\\
&\left . + \frac{129}{32}(\# \,\Gamma_{6,7} \in {\cal H}) + \frac{27}{32}\cdot 2(\# \,\Gamma_{6,8} \in {\cal H}) + \frac{63}{16}(\# \,\Gamma_{6,9} \in {\cal H}) + \frac{117}{32} (\# \,\Gamma_{6,10} \in {\cal H}) \right )
\end{align*} 
where $\Gamma_{6,i}$ are provided in Figure \ref{T:1}.

\begin{figure}[ht]
\begin{center}
 \begin{tabular}{|c|c|c|c|} 
 \hline
 $\Gamma$ & $E(\Gamma)$& $C_{\Gamma}$ & $|\Aut(\underline{\Gamma})|/|\Aut(\Gamma)|$ \\ 
 \hline\hline$\Gamma_{5,1}$ & $(123)(125)(145)(234)(345)$ & 51/16 & 1 \\ 
 \hline
  $\Gamma_{5,2}$ & $(123)(145)(145)(234)(235)$ & 27/16 & 1 \\ 
  \hline
  $\Gamma_{6,1}$ & $(123)^3(124)^3$ & 9/8 & 1 \\
  \hline
  $\Gamma_{6,2}$ & $(123)^3 (145)^3$ & 9/32 & 1 \\ 
 \hline 
 $\Gamma_{6,3}$ & $(123)^2(124)(135)(145)^2$ & 99/32 & 2 \\ 
  \hline 
 $\Gamma_{6,4}$ & $(123)(124)(125)(134)(135)(145)$ &213/16 & 1\\ 
   \hline 
 $\Gamma_{6,5}$ & $(123)(124)(156)(256)(345)(346)$ &69/16 & 1\\ 
  \hline 
 $\Gamma_{6,6}$ & $(123)(124)(145)(246)^3$ &63/32 & 1\\ 
 \hline
 $\Gamma_{6,7}$ & $(123)(134)(145)(246)(256)^2$ & 129/32 & 1\\ 
  \hline
 $\Gamma_{6,8}$ & $(123)^2(124)(356)(456)^2$ &27/32 & 2\\ 
 \hline
  $\Gamma_{6,9}$ & $(123)(124)(134)(256)(356)(456)$ &63/16 & 1\\ 
 \hline
  $\Gamma_{6,10}$ & $(123)(124)(135)(246)(356)(456)$ &117/16 & 1\\ 
 \hline
  $\Gamma_{9,2}$ & $(123)^6(145)^3$ & $9/32$ & 2 \\ 
  \hline
  $\Gamma_{9,3}$ & $(123)^3(145)^3(246)^3$ & 9/8 & 1 \\
  \hline
  $\Gamma_{9,4}$ & $(123)^3(145)^3(167)^3$ & $81/128$ & 1\\
  \hline
  $\Gamma_{12,1}$ & $(123)^9(145)^3$ & $9/32$ &2\\
  \hline
  $\Gamma_{12,2}$ & $(123)^6(145)^6$ & $27/64$ &1\\
  \hline
   $\Gamma_{12,3}$ & $(123)^6(145)^3(167)^3$ & $81/128$ &3\\
  \hline
   $\Gamma_{12,4}$ & $(123)^6(145)^3(246)^3$ & $63/32$ &3\\
  \hline
     $\Gamma_{12,5}$ & $(123)^3(145)^3(167)^3(246)^3$ & $459/64$ &1\\
  \hline
       $\Gamma_{12,6}$ & $(123)^3(145)^3(246)^3(356)^3$ & $255/16$ &1\\
  \hline
\end{tabular}
\caption{\label{T:1}Some connected Veblen 3-graphs and corresponding values}
\end{center}
\end{figure}


Recall that ${\cal V}_d$ and ${\cal V}_d^*$ denote the number of connected and (possibly) disconnected Veblen 3-graphs with $d$ edges.  We have computed
\[
(|{\cal V}_d|)_{d =1}^\infty = (0,0,1,1,2,11,26,122,781, \dots)
\]
and further 
\[
(|{\cal V}_d^*|)_{d =1}^\infty = (0,0,1,1,2,12,27,125,795, \dots)
\]
see A320648 \cite{OEIS}. 

\begin{remark}
The Fano Plane is a Veblen 3-graph with seven edges. The associated coefficient of the Fano Plane is $87/16 = 5.4375$.
\end{remark}
Note that, for the classical Harary-Sachs Theorem, the number of elementary graphs one needs to sum over for the codegree-$d$ coefficient is simply the number of partitions of $d$ into positive parts.  The number of Veblen 3-graphs is exponentially larger.

To demonstrate Theorem \ref{T:codegreeFormula} we have computed the first sixteen coefficients of the characteristic polynomial of the Fano Plane with two edges removed (called the Rowling hypergraph in \cite{Cla}), the Fano Plane with one edge removed, and the Fano Plane in Figure \ref{T:Fano}.  Aside from the novelty of performing these computations, the coefficients can be used to determine the characteristic polynomial of a given hypergraph \cite{Cla}.  
Consider the Rowling hypergraph
\[
{\mathcal R} = ([7], \{1,2,3\},\{\{1,4,5\},\{1,6,7\},\{2,5,6\},\{3,5,7\}\}).
\]
Using the aforementioned algorithm we were able to compute 
\begin{align*}
    \phi({\mathcal R}) = & x^{133}(x^3-1)^{27}(x^{15}-13x^{12}+65x^9-147x^6+157x^3-64)^{12} \\
    &\cdot (x^6-x^3+2)^6(x^6-17x^3+64)^3
\end{align*}
when traditional methods (i.e., direct computation of the resultant) failed to do so.  In this case, we can determine $\phi({\cal R})$ if we know $c_3,c_6, c_9, c_{12}$.  We show
\begin{align*}
    c_3 &= -240 \\
    c_6 &= 28320 \\
    c_9 &= -2190860 \\
    c_{12} &= 125012034.
\end{align*}
One can show that every Veblen infragraph of ${\cal R}$ has edge multiplicity congruent to 0 modulo 3 (this is immediate as the roots of $\phi({\cal R})$ are invariant under multiplication by third roots of unity \cite{Coo}).  Appealing to our previous formulae we have
\[
c_3 = -2^7\cdot\frac{3}{8}(\# \, e \in {\cal H}) = -2^n\cdot\frac{3}{8}\cdot 5 = -240
\]
and
\begin{align*}
c_6 &= \frac{c_3^2}{2!} -2^n\left( \frac{3}{16}(\# \, e \in {\cal H})+ \frac{9}{32} (\# \,\Gamma_{6,2} \in {\cal H})\right)
\\&=\frac{(-240)^2}{2} -2^7\left( \frac{3}{16}\cdot 5 + \frac{9}{32} \binom{5}{2}\right) =28320.
\end{align*}
Similarly we have
\begin{align*}
c_9 &= \frac{c_3^3}{3!} + c_3c_6- 2^7 \left (\frac{1}{8} (\# \, e \in {\cal H})+\frac{9}{32}\cdot 2(\#\, \Gamma_{9,2} \in {\cal H}) + \frac{9}{8}(\#\, \Gamma_{9,3} \in {\cal H}) + \frac{81}{128}(\# \, \Gamma_{9,4} \in {\cal H})
\right )\\
&= -2190860
\end{align*}
and 
\begin{align*}
c_{12} &= \frac{c_3^4}{4!} + \frac{c_3^2c_6}{2!} + c_3c_9 + 2^7\bigg(\frac{3}{32}(\# \, e \in {\cal H}) + \frac{9}{32}\cdot 2(\# \Gamma_{12,1} \in {\cal H}) \frac{27}{64} (\# \Gamma_{12,2} \in {\cal H}) \\
&+ \frac{81}{128}\cdot 3 (\# \Gamma_{12,3} \in {\cal H}) + \frac{63}{32}\cdot 3(\# \Gamma_{12,4} \in {\cal H}) + \frac{459}{64}(\# \Gamma_{12,5} \in {\cal H}) + \frac{255}{16}(\# \Gamma_{12,6} \in {\cal H})\bigg)\\
&=125012034.
\end{align*}

\begin{figure}[ht]
\begin{center}
\begin{tabular}{ |c|c|c|c| }
\hline
&FP-2 & FP-1 & FP \\
\hline
$c_0$ & 1 & 1 & 1 \\ 
\hline
$c_1$ & 0 & 0 & 0 \\ 
\hline
$c_2$ & 0 & 0 & 0  \\ 
\hline
$c_3$ & -240 & -288 & -336 \\
\hline
$c_4$ & 0 & 0 & 0  \\ 
\hline
$c_5
$ & 0 & 0 & 0  \\ 
\hline
$c_6$ & 28320 & 40788 &  55524\\
\hline
$c_7$ & 0 & 0 &  -696\\
\hline
$c_8$ & 0 & 0 & 0  \\ 
\hline
$c_{9}$ & -2190860 & -3788016 & -6017746\\
\hline
$c_{10}$ &  0  & 0& 220038\\
\hline
$c_{11}$ & 0 & 0 & 0  \\ 
\hline
$c_{12}$ &  125012034  & 259553826& 481293561\\
\hline
$c_{13}$ & 0 & 0 & -34237560  \\ 
\hline
$c_{14}$ & 0 & 0 & -122004  \\ 
\hline
$c_{15}$ & 5612445168 & -13997317932 & -30303162330\\
\hline
\end{tabular}
\end{center}
\caption{\label{T:Fano}
Leading coefficients of the Fano plane and its subgraphs.}
\end{figure}

\begin{figure}[ht]
\begin{center}
\includegraphics[scale = .5]{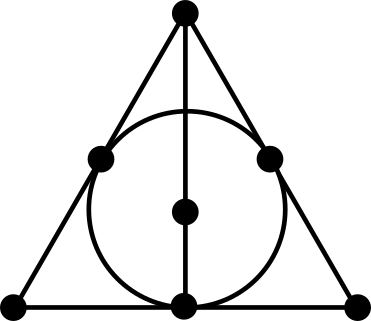} 
\hspace{1cm}
\includegraphics[scale = .5]{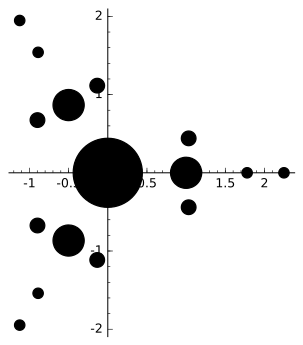} 
\end{center}
\caption{\label{F:2}The Rowling hypergraph and its spectrum, visualized in the complex plane by centering a disk at each eigenvalues whose area is proportional to its multiplicity.}
\end{figure}

\section{Open Questions}

A characterization of the multiplicity of the zero-eigenvalue for the adjacency characteristic polynomial of a graph remains open.  For convenience let $m_0$ denote the multiplicity of the zero-eigenvalue for a given polynomial.   Notice that one can provide an upper bound on $m_0$ by showing that a particular coefficient of $\phi({\cal H})$ is non-zero.  Combining this idea with the Harary-Sachs theorem gives the only known result in this direction for the adjacency characteristic polynomial: if $T$ is a (2-uniform) tree then $m_0$ is the size of the largest matching of $T$.  The authors ask if a similar result holds true for hypergraphs.  

In our proof of Lemma \ref{L:HS} we showed that, for 2-graphs, the summands in $c_d(G)$ arising from Veblen graphs which are not elementary graphs necessarily summed to zero.  We define the \emph{coefficient threshold} of $\phi({\cal H})$ as the least co-degree at which the coefficients of $\phi({\cal H})$ cancel thusly:

\begin{definition}
For an integer $v \geq 0$, the \emph{coefficient $v$-threshold} of a $k$-graph ${\cal H}$, denoted $\Th_v({\cal H})$, is the least integer such that for $d > \Th_v({\cal H})$
\[
\sum_{H \in {{\cal V}_d^*}({\cal H})}(-((k-1)^v)^{c(H)})C_H(\# H \subseteq {\cal H}) = 0.
\]
\end{definition}

Note that the contribution of $\cal{H}$ to the codegree-$d$ coefficient of $\phi(\cal{G})$ for ${\cal H} \subseteq \cal{G}$ is zero if $d > \Th_v(\cal{H})$ where $v = |V(\cal{G})|$.  As an example, we show that the $v$-threshold of the 3-uniform edge is $9\cdot 2^{v-3}$.

\begin{lemma}
\label{L:thresh}
Let $e$ be the 3-uniform edge.  Then $\Th_v(e) = 9\cdot2^{v-3}$ for $v \geq 3$.
\end{lemma}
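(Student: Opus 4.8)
The quantity $\Th_v(e)$ asks for the largest codegree $d$ at which the Veblen infragraphs of the single $3$-edge $e$ contribute something nonzero to $c_d$. A Veblen infragraph of $e$ is, by definition, a $3$-uniform, $3$-valent multigraph whose flattening is contained in $e$; since $e$ has only one edge, every such infragraph is just the single edge taken with some multiplicity $m$, where $3 \mid m$ (so that each vertex has degree $m$, which must be divisible by $3$). Write $e^{(m)}$ for the edge with multiplicity $m$. So the sum defining the threshold condition becomes a sum over disjoint unions of copies of $e^{(m_i)}$ with $\sum m_i = d$ and each $3 \mid m_i$; in particular the condition is vacuously satisfied (the sum is empty) unless $3 \mid d$, so we only need to track $d = 3j$. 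The first step is therefore to compute $C_{e^{(m)}}$ for a single edge of multiplicity $m = 3\ell$: by Lemma~\ref{L:graphcoef}'s hypergraph analogue — or directly from the definition of $C_H$ via Euler rootings — one finds $C_{e^{(m)}}$ is a fixed rational, and I expect from the table (entries like $\Gamma_{6,2}$ with $C = 9/32$ for $(123)^3(145)^3$, and the $c_3$ formula giving $3/8$ for $(123)^3$) that $C_{e^{(3)}} = 3/8$ and more generally $C_{e^{(3\ell)}}$ grows in a controlled way; I would pin this down by counting Euler rootings of the single edge with multiplicity $3\ell$ and applying the arborescence formula, exactly as in the simplex computation.

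**Reduction to a generating-function identity.** Once $C_{e^{(3\ell)}}$ is known, the threshold sum at codegree $d = 3j$ is
\[
A_j(v) := \sum_{\lambda \vdash j} \frac{1}{|\Aut(\cdot)|}\bigl(-2^{v}\bigr)^{\ell(\lambda)} \prod_i C_{e^{(3\lambda_i)}},
\]
where the sum is over partitions $\lambda = (\lambda_1, \lambda_2, \dots)$ of $j$, $\ell(\lambda)$ is the number of parts, and the automorphism factor accounts for $(\#H \subseteq {\cal H})$ with ${\cal H} = e$ (here $\{S \subseteq e : S \cong \underline{G_i}\}$ always has exactly one element). This is a standard exponential-formula setup: $\sum_j A_j(v) x^j = \exp\!\bigl(-2^v \sum_{\ell \geq 1} C_{e^{(3\ell)}} x^\ell\bigr)$ (up to bookkeeping of the automorphism normalization, which is precisely what makes the exponential formula apply). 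The claim $\Th_v(e) = 9 \cdot 2^{v-3}$ then becomes: $A_j(v) = 0$ for all $3j > 9 \cdot 2^{v-3}$, i.e. for all $j > 3 \cdot 2^{v-3}$, and $A_j(v) \neq 0$ for $j = 3 \cdot 2^{v-3}$. So I need the inner series $g(x) := \sum_{\ell\geq 1} C_{e^{(3\ell)}} x^\ell$ to be such that $\exp(-2^v g(x))$ is a \emph{polynomial} of degree exactly $3 \cdot 2^{v-3}$ in $x$. That forces $g(x)$ to be (a constant times) $\log$ of a polynomial; concretely I expect $g(x) = -\tfrac{1}{2^{v}}\cdot(\text{something})$... no — rather, $g$ itself should satisfy $\exp(-2^v g(x)) = (1 - c\,x)^{N}$-type or, more likely given the power $2^{v-3}$, something like $\bigl(\text{quadratic in }x\bigr)^{2^{v-3}}$, since $c_3(e) = -2^v \cdot 3/8$ already shows the degree-$1$ term, and $2^{v-3}\cdot 3$ matches $9\cdot 2^{v-3}/3$. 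The key computation is thus to evaluate $C_{e^{(3\ell)}}$ in closed form and recognize $\sum C_{e^{(3\ell)}} x^\ell$ as $-2^{-v}$ times a logarithm of a degree-$3$ polynomial (so that $2^v g$ has the form $-2^{v-3}\log(\text{cubic})$ — wait, degree $9\cdot 2^{v-3}$ with exponent $2^{v-3}$ gives a cubic), whence $\exp(-2^v g) = (\text{cubic})^{2^{v-3}}$ has degree $3\cdot 2^{v-3}$ in $\ell$, i.e. degree $9 \cdot 2^{v-3}$ in the edge count $d$.

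**Main obstacle and endgame.** The main obstacle is the exact evaluation of $C_{e^{(m)}}$ for general $m$ divisible by $3$ — i.e. counting Euler rootings of a single $3$-edge with multiplicity $m$ and summing $\tau_D / \prod \deg^-$. This is a self-contained but genuinely combinatorial problem about a multigraph on $3$ vertices with $m$ parallel (hyper)edges; after the bijection with something like balanced orientations / sequences, it should reduce to a sum of multinomial coefficients times a tree-count (via Kirchhoff / BEST), and I anticipate the answer has a clean hypergeometric-to-closed form that makes the logarithm identity transparent. Once $C_{e^{(m)}}$ is in hand, verifying that $\exp(-2^v g(x))$ is exactly a $2^{v-3}$-th power of a cubic — and in particular that its top coefficient does not vanish, giving the \emph{sharpness} $A_{3\cdot 2^{v-3}}(v) \neq 0$ rather than just the upper bound — is the remaining step; sharpness follows because the leading coefficient of $(\text{cubic})^{2^{v-3}}$ is the $(2^{v-3})$-th power of the cubic's leading coefficient, hence nonzero. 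Finally I would double-check the small cases against the data in the paper: $v = 3$ predicts $\Th_3(e) = 9$, consistent with $c_3, c_6, c_9 \neq 0$ and (after the cancellations) $c_{12}$'s edge-contribution vanishing at $v=3$, which one can cross-check against the Rowling-hypergraph computation where $n = 7 > 3$ and the edge still contributes at $c_{12}$.
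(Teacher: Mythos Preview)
Your exponential-formula framework is correct: with $(\#H\subseteq e)=1/\prod_j m_j!$ for $H=\bigsqcup_i e^{(3\lambda_i)}$, the threshold sum at $d=3j$ really is the $x^j$-coefficient of $\exp\bigl(-2^{v}\,g(x)\bigr)$ with $g(x)=\sum_{\ell\ge 1} C_{e^{(3\ell)}}x^{\ell}$, so the lemma is equivalent to this being a polynomial of degree exactly $3\cdot 2^{v-3}$. The paper arrives at the same generating function but handles it differently, and more cheaply, than you propose. Instead of computing each $C_{e^{(3\ell)}}$ by Euler rootings and BEST, it anchors at $v=3$: by Theorem~\ref{T:codegreeFormula}, $\sum_{t}f_3(t)x^{t}$ is literally the codegree generating function of $\phi(e)$, and since $\phi(e)=\lambda^{3}(\lambda^{3}-1)^{3}$ one reads off $f_3(t)=(-1)^{t}\binom{3}{t}$ immediately. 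Then it proves the convolution identity $f_{n+1}(t)=\sum_{j}f_n(j)f_n(t-j)$ (in your language, the tautology $\exp(-2^{n+1}g)=\bigl(\exp(-2^{n}g)\bigr)^{2}$) and concludes by Chu--Vandermonde that $f_v(t)=(-1)^{t}\binom{3\cdot 2^{v-3}}{t}$, giving both vanishing for $t>3\cdot 2^{v-3}$ and sharpness at $t=3\cdot 2^{v-3}$.

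So the ``cubic'' you are trying to identify is simply $(1-x)^{3}$, and the paper extracts it from $\phi(e)$ rather than from the $C_{e^{(3\ell)}}$. Your route is not wrong, but your ``main obstacle'' is a real obstacle you have not cleared: to finish your way you would need the closed form $C_{e^{(3\ell)}}=\tfrac{3}{8\ell}$ (equivalently $g(x)=-\tfrac{3}{8}\log(1-x)$), which is a genuine rooting/arborescence computation on the triple edge with multiplicity $3\ell$. The paper's approach buys you exactly the avoidance of that computation; conversely, your approach, if completed, would yield the individual $C_{e^{(3\ell)}}$ as a by-product, which the paper's argument does not.
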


\begin{proof}
For $n \geq 0$ define $f_n(t) : \mathbb{Z}^+ \to \mathbb{Q}$ by $f_n(0)=1$ and 
\[
f_n(t) = \sum_{H \in {{\cal V}^*_{3t}}(e)}(-(2^n)^{c(H)})C_H(\# H \subseteq e), t > 0.
\]
Observe that $f_3(t) = \phi_{3t}(e)$ by Theorem \ref{T:codegreeFormula}.  We conclude by showing $f_n(t) = (-1)^t\binom{3\cdot2^{n-3}}{t}.$ Considering the characteristic polynomial of a single $3$-uniform edge,
\[
f_3(0) = 1,\; f_3(1) = -3, \;f_3(2) = 3,\; f_3(3) = -1, \;f_3(4) = 0
\]
and $f_3(t) = 0$ for $t > 4$.  Indeed $f_3(t) = (-1)^t \binom{3}{t}$ for all $t$.    Suppose that for all $n$, up to some fixed $n$, we have  $f_n(t) = (-1)^t\binom{3\cdot2^{n-3}}{t}$ for all $t$.  Consider $f_{n+1}(t)$.  We claim 
\[
f_{n+1}(t) = \sum_{j=0}^tf_n(j)f_n(t-j).
\]
 Recall that the associated coefficient is multiplicative over components.  It follows that
\begin{align*}
\sum_{j=0}^tf_n(j)f_n(t-j) &= \sum_{H \in {\cal V}_3^*(e)} \left( \sum_{\substack{H =H_1 \cup H_2 \\H_1 \cap H_2 = \emptyset}} (-(2^n))^{c(H_1)}C_{H_1}\cdot(-(2^n))^{c(H_2)}C_{H_2}\right)\\
&=\sum_{H \in {\cal V}_3^*(e)} 2^n (-(2^n))^{c(H)}C_H = f_{n+1}(t).
\end{align*}
We have
\[
f_{n+1}(t) = (-1)^t \sum_{j=0}^t \binom{3\cdot 2^{n-3}}{j}\binom{3\cdot2^{n-3}}{t-j} = (-1)^t\binom{3\cdot2^{n-2}}{t}
\]
where the first equality follows from the inductive hypothesis and the second equality is given by the Chu-Vandermonde identity \cite{Chu}.  As $\Th_v(e) = f_v(t)$ we have that $\Th_v(e) = 9 \cdot2^{n-3}$ as $f_v(3\cdot2^{n-3}) = \pm 1$ for $t = 3\cdot2^{n-2}$ and $f_v = 0$ for $t > 3\cdot2^{n-2}$. 
\end{proof}

\begin{conjecture}
If ${\cal H} \subseteq {\cal G}$ are $k$-graphs, with $k > 2$ where $|V({\cal G})| = n$ then $\Th_n({\cal H}) \leq \Th_n({\cal G})$.
\end{conjecture}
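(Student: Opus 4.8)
The plan is to recast each threshold as the degree of a generating polynomial, use the multiplicativity of the associated coefficient to determine how $\Th_v$ scales with $v$, and thereby reduce the conjecture to a monotonicity statement about the multiplicity of the zero eigenvalue. For a simple $k$-graph ${\cal K}$ and an integer $v\ge 0$ put
\[
\Phi_{{\cal K},v}(x)=\sum_{d\ge 0}\Bigl(\sum_{H\in{\cal V}_d^*({\cal K})}(-(k-1)^v)^{c(H)}C_H(\#H\subseteq{\cal K})\Bigr)x^d ,
\]
so that $\Th_v({\cal K})=\deg\Phi_{{\cal K},v}$ whenever $\Phi_{{\cal K},v}$ is a polynomial. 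By Theorem~\ref{T:codegreeFormula}, when $v=h:=|V({\cal K})|$ the series $\Phi_{{\cal K},h}$ is the coefficient-reversed polynomial $x^{N}\phi({\cal K})(1/x)$ of the characteristic polynomial, $N=h(k-1)^{h-1}$; hence it is a polynomial of degree $N-m_0({\cal K})$, where $m_0({\cal K})$ is the multiplicity of $0\in\spec({\cal K})$. By the same bookkeeping as in the recursion in the proof of Lemma~\ref{L:thresh} --- distribute the $c(H)$ components of a Veblen infragraph among $k-1$ ordered classes, use that $C_{(\cdot)}$ is multiplicative over components, and note that the automorphism factors hidden in $(\#\,\cdot\subseteq{\cal K})$ collapse by the multinomial theorem --- one gets $\Phi_{{\cal K},v+1}(x)=\Phi_{{\cal K},v}(x)^{k-1}$. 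Therefore, for every $v\ge h$ the series $\Phi_{{\cal K},v}=\Phi_{{\cal K},h}^{(k-1)^{v-h}}$ is a polynomial and
\[
\Th_v({\cal K})=(k-1)^{v-h}\,\Th_h({\cal K})=h(k-1)^{v-1}-(k-1)^{v-h}m_0({\cal K}).
\]

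Now take ${\cal H}\subseteq{\cal G}$ with $n=|V({\cal G})|$, $h=|V({\cal H})|$, and let ${\cal H}^+={\cal H}\sqcup(n-h)K_1$ be ${\cal H}$ together with $n-h$ isolated vertices, so that ${\cal H}^+\subseteq{\cal G}$ on the common vertex set. The standard disjoint-union identity $\phi({\cal K}\sqcup K_1)=\lambda^{(k-1)^{|V({\cal K})|}}\,\phi({\cal K})^{k-1}$ (cf.\ \cite{Sha}) gives, upon iteration, $m_0({\cal H}^+)=(n-h)(k-1)^{n-1}+(k-1)^{n-h}m_0({\cal H})$, and substituting into the previous display yields
\[
\Th_n({\cal H})=\Th_n({\cal H}^+)=n(k-1)^{n-1}-m_0({\cal H}^+),\qquad \Th_n({\cal G})=n(k-1)^{n-1}-m_0({\cal G}).
\]
Thus the conjecture is equivalent to the inequality $m_0({\cal G})\le m_0({\cal H}^+)$ for simple $k$-graphs ${\cal H}^+\subseteq{\cal G}$ on a common vertex set; deleting the edges of $E({\cal G})\setminus E({\cal H}^+)$ one at a time, it suffices to prove that for $k>2$, every $k$-graph ${\cal G}$ and every $e\in E({\cal G})$ satisfy $m_0({\cal G}-e)\ge m_0({\cal G})$ --- that is, \emph{deleting an edge never decreases the multiplicity of the zero eigenvalue when $k>2$}. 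This dovetails with the open problem stated above: the conjecture is exactly a monotonicity property of the quantity $m_0$ that one is trying to understand.

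This edge-deletion inequality is the crux, and I expect it to be the main obstacle. I would attack it by writing $\phi({\cal G})=\mathrm{Res}_{\bf x}\bigl(\lambda\,x_i^{k-1}-(A_{\cal G}{\bf x}^{k-1})_i\bigr)$ and identifying $m_0({\cal G})$ with the order of vanishing at $\lambda=0$, i.e.\ with the multiplicity at the origin of the scheme cut out by the forms $(A_{\cal G}{\bf x}^{k-1})_i$; passing from $A_{\cal G}$ to $A_{{\cal G}-e}$ is a one-parameter degeneration of those forms, and one wants upper semicontinuity of this multiplicity along the degeneration. The delicate point, and the place where $k>2$ must be used, is controlling solutions that run off to coordinate hyperplanes or to infinity; the analogue fails for $k=2$ (for instance $P_4\subseteq C_4$ but $m_0(C_4)=2>0=m_0(P_4)$), so no argument insensitive to $k>2$ can succeed. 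An alternative, purely combinatorial route is to prove $m_0({\cal G}-e)\ge m_0({\cal G})$ straight from Theorem~\ref{T:codegreeFormula} --- equivalently, that the top nonvanishing codegree coefficient of $\phi({\cal G})$ sits at codegree at least that of $\phi({\cal H}^+)$ --- trading the algebraic geometry for control of the cancellations among Veblen-infragraph contributions, which we presently lack in general. Granting the edge-deletion monotonicity, the conjecture follows from the reduction above.
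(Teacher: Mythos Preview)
The statement you are addressing is presented in the paper as an open \emph{conjecture}; the paper offers no proof, only the remark that the conjecture would imply an upper bound on $m_0({\cal G})$ and that the restriction $k>2$ is necessary. So there is no ``paper's proof'' to compare against.

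Your proposal is not a proof either, and you say as much. What you give is a reduction: using the exponential-formula identity $\Phi_{{\cal K},v+1}=\Phi_{{\cal K},v}^{\,k-1}$ (which is correct---it follows exactly as you indicate, since $\Phi_{{\cal K},v}(x)=\exp\!\bigl(-(k-1)^v\sum_{G}a(G)\bigr)$ with the sum over connected Veblen infragraph types and $a(G)=C_G\,\tfrac{|\Aut(\underline G)|}{|\Aut(G)|}\,|\{S\subseteq{\cal K}:S\cong\underline G\}|\,x^{|E(G)|}$), you obtain
\[
\Th_n({\cal H})=\Th_n({\cal H}^+)=n(k-1)^{n-1}-m_0({\cal H}^+),\qquad \Th_n({\cal G})=n(k-1)^{n-1}-m_0({\cal G}),
\]
and hence the conjecture is \emph{equivalent} to $m_0({\cal G})\le m_0({\cal H}^+)$ for ${\cal H}^+\subseteq{\cal G}$ on a common vertex set, i.e.\ to edge-deletion monotonicity of $m_0$ for $k>2$. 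This equivalence is a genuine sharpening of the paper's one-directional remark, and the derivation is sound (your use of $\phi({\cal K}\sqcup K_1)=\lambda^{(k-1)^{|V({\cal K})|}}\phi({\cal K})^{k-1}$ and the ensuing recursion for $m_0$ checks out).

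The gap, which you correctly name, is that the edge-deletion monotonicity of $m_0$ is itself the open problem that motivates the conjecture in the first place. Neither of your two sketched attacks---upper semicontinuity of the multiplicity of the origin in a one-parameter degeneration of the resultant system, or a direct cancellation argument via Theorem~\ref{T:codegreeFormula}---is carried out, and both face exactly the obstruction you identify (the $k=2$ counterexamples show any argument must use $k>2$ in an essential way). So your contribution is a clean and useful reformulation, but the conjecture remains open after it.
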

This conjecture implies that the multiplicity of the zero-eigenvalue is at most $\deg(\phi({\cal G})) - \Th_n({\cal H})$.  Note that the restriction of $k > 2$ is necessary as the conjecture is not true for graphs.  For example, $C_6 \subseteq K_{6,6}$ and $\Th(K_{6,6}) = 2< \Th(C_6) = 6$.  More generally, it would help our understanding of the multiplicity of 0 to have a better understanding of $\Th_v(\mathcal{H})$, and so we ask:
\begin{question} Show how to compute or estimate $\Th_v(\mathcal{H})$ for various hypergraphs $\mathcal{H}$.
\end{question}

\bibliography{main.bib}{}

\begin{thebibliography}{10}

\bibitem{Big}
Norman Biggs.
\newblock {\em Algebraic graph theory}.
\newblock Cambridge Mathematical Library. Cambridge University Press,
  Cambridge, second edition, 1993.

\bibitem{Chu}
F.~R.~K. Chung, R.~L. Graham, and R.~M. Wilson.
\newblock Quasi-random graphs.
\newblock {\em Combinatorica}, 9(4):345--362, 12 1989.

\bibitem{Cla}
Gregory~J Clark and Joshua~N Cooper.
\newblock Stably computing the multiplicity of known roots given leading
  coefficients.
\newblock {\em Numerical Linear Algebra with Applications}, 27(2):e2275, 2020.

\bibitem{Cla0}
Gregory~J. Clark and Joshua~N. Cooper.
\newblock A harary-sachs theorem for hypergraphs.
\newblock {\em Journal of Combinatorial Theory, Series B}, 149:1--15, 2021.

\bibitem{Coo}
Joshua Cooper and Aaron Dutle.
\newblock Spectra of uniform hypergraphs.
\newblock {\em Linear Algebra Appl.}, 436(9):3268--3292, 2012.

\bibitem{Har}
Frank Harary.
\newblock The determinant of the adjacency matrix of a graph.
\newblock {\em SIAM Rev.}, 4:202--210, 1962.

\bibitem{Hu2}
Shenglong Hu, Zheng-Hai Huang, Chen Ling, and Liqun Qi.
\newblock On determinants and eigenvalue theory of tensors.
\newblock {\em J. Symbolic Comput.}, 50:508--531, 2013.

\bibitem{Sac}
Horst Sachs.
\newblock \"{U}ber {T}eiler, {F}aktoren und charakteristiche {P}olynome von
  \\{G}raphen.
\newblock {\em {T}eil I}, 12:7--12, 1966.

\bibitem{Sha}
Jia-Yu Shao, Liqun Qi, and Shenglong Hu.
\newblock Some new trace formulas of tensors with applications in spectral
  hypergraph theory.
\newblock {\em Linear Multilinear Algebra}, 63(5):971--992, 2015.

\bibitem{OEIS}
N.~J.~A. Sloane.
\newblock The on-line encyclopedia of integer sequences, 2018.
\newblock published electronically at https://oeis.org.

\bibitem{Aar}
T.~van Aardenne-Ehrenfest and N.G. de~Bruijn.
\newblock {\em Circuits and Trees in Oriented Linear Graphs}, pages 149--163.
\newblock Birkh{\"a}user Boston, Boston, MA, 1987.

\bibitem{Veb}
Oswald Veblen.
\newblock An application of modular equations in analysis situs.
\newblock {\em Ann. of Math. (2)}, 14(1-4):86--94, 1912/13.

\end{thebibliography}
\bibliographystyle{plain}

\end{document}